\documentclass[journal]{IEEEtran}
\ifCLASSINFOpdf
\else
\fi
%
%

\usepackage{amsmath,amssymb,amsfonts,amsthm,dsfont} 
\usepackage{graphics}
\usepackage[dvips]{graphicx}
\usepackage[table,usenames,dvipsnames]{xcolor}
\usepackage{subcaption}
\usepackage{tikz-cd}
\usepackage{algorithm2e}
\usepackage[breaklinks=true, colorlinks, bookmarks=true, citecolor=Black, urlcolor=Violet,linkcolor=Black]{hyperref}

\usepackage{hyperref}

\newtheorem{theorem}{Theorem}
\newtheorem{lemma}{Lemma}

\newtheorem{assum}{Assumption}
\theoremstyle{definition}

\newcommand{\pa}{\partial}
\newcommand{\ba}{\begin{align}}
\newcommand{\ea}{\end{align}}
\newcommand{\fr}{\frac}
\newcommand{\gam}{\gamma}
\newcommand{\ep}{\varepsilon}

\newcommand{\R}{{\mathbb R}}








\begin{document}
\setlength\abovedisplayskip{8pt}
\setlength\belowdisplayskip{8pt}
\setlength\abovedisplayshortskip{8pt}
\setlength\belowdisplayshortskip{8pt}
 
\allowdisplaybreaks
 
\setlength{\parindent}{1em}
\setlength{\parskip}{0em}
%
\title{Safe PDE Backstepping QP Control with \\High Relative Degree CBFs:\\ Stefan Model with Actuator Dynamics}
%
%
%

\author{Shumon~Koga,~\IEEEmembership{Member,~IEEE,}
        and~Miroslav~Krstic,~\IEEEmembership{Fellow,~IEEE}
\thanks{S. Koga is with the Department
of Electrical and Computer Engineering, University of California at San Diego, La Jolla,
CA, 92093-0411 USA (e-mail: skoga@ucsd.edu).}
\thanks{M. Krstic is with the Department
of Mechanical and Aerospace Engineering, University of California at San Diego, La Jolla,
CA, 92093-0411 USA (e-mail: krstic@ucsd.edu).}}

\maketitle

\begin{abstract}
High-relative-degree control barrier functions (hi-rel-deg CBFs) play a prominent role in automotive safety and in robotics. In this paper we launch a generalization of this concept for PDE control, treating a specific, physically-relevant model of thermal dynamics where the boundary of the PDE moves due to a liquid-solid phase change---the so-called Stefan model. The familiar QP design is employed to ensure safety but with CBFs that are infinite-dimensional (including one control barrier ``functional'') and with safe sets that are infinite-dimensional as well. Since, in the presence of actuator dynamics, at the boundary of the Stefan system, this system's main CBF is of relative degree two, an additional CBF is constructed, by backstepping design, which ensures the positivity of all the CBFs without any additional restrictions on the initial conditions. It is shown that the ``safety filter'' designed in the paper guarantees safety in the presence of an arbitrary operator input. This is similar to an automotive system in which a safety feedback law overrides---but only when necessary---the possibly unsafe steering, acceleration, or braking by a vigorous but inexperienced driver. Simulations have been performed for a process in metal additive manufacturing, which show that the operator's heat-and-cool commands to the Stefan model are being obeyed but without the liquid ever freezing. 
\end{abstract}

\begin{IEEEkeywords}
Control barrier function (CBF), distributed parameter systems, Stefan problem, nonlinear control, safety, quadratic program, additive manufacturing.  
\end{IEEEkeywords}

%
\IEEEpeerreviewmaketitle

\section{Introduction}
\paragraph{Safety, hi-rel-deg CBFs, and non-overshooting control}
{Guaranteed} safety is a necessity  in most engineering applications, including robotic and automotive systems. Several approaches have been developed for ensuring  safety, such as reachability analysis \cite{bansal2017hamilton}, contraction theory \cite{singh2017robust}, model predictive control \cite{hewing2020learning}, and so on.  
Since the seminal work on control barrier functions (CBF)~\cite{ames2016control}, CBF-based designs for safety have garnered tremendous  attention. While they come with Lyapunov-like characterizations, CBFs are actually system outputs, which need to be kept positive. An alternative perspective is that keeping those inputs positive ensures that a certain desired set is positively/forward-invariant for the system. 

The introduction of high relative degree CBFs in 2016 \cite{nguyen2016exponential}, and their further nonlinear refinement \cite{xiao2019control}, constitute breakthroughs in removing the  relative degree one restrictions of the early CBF work. Much progress has followed, including~\cite{JANKOVIC2018359,breeden2021high,XU2018195,dhiman2020control}, to mention a few.

While not called `high-relative-degree CBFs' (or CBFs at all), they do first appear in the second author's 2006 article \cite{krstic2006nonovershooting}, ten years prior to \cite{nguyen2016exponential}. In that article \cite{krstic2006nonovershooting}, for a class of nonlinear systems, the so-called ``non-overshooting control'' problem is solved, as one form of safety used to be referred to in the earlier literature, particularly for setpoint regulation of outputs of linear plants without exceeding the setpoint. 

\paragraph{Safe control for PDEs}

Control with safety guarantees  plays a prominent role in robotics and in collision avoidance for autonomous vehicles. For PDEs, i.e., in infinite dimension, safe control has appeared only where the infinite dimensional state needs to maintain a positive value for reasons of physical validity of the model, such as when the state is distributed concentration \cite{refId0} or gas density \cite{karafyllis2021global}, or when the liquid level must be kept below a certain value to avoid spilling \cite{karafyllis2021spillfree}.

The present paper makes a significant advance to this inquiry---PDE safety. We consider a thermal system, known as the Stefan PDE-ODE system, which models a solid-liquid phase change and whose liquid needs to be kept from developing islands of solid in its midst, while an operator is pursuing his objective which may, for example, be the relocation of the liquid-solid interface to a desired position. 

While we did achieve in \cite{Shumon19journal} the result of regulating the interface to a setpoint while maintaining the entire liquid in that phase, we had done that using direct actuation of the heat flux at the liquid boundary. Realistic actuation does not have direct access to heat flux but is performed with an electrical actuator whose input is voltage. In other words, the input for which a controller was designed in \cite{Shumon19journal} and the realistic voltage input are separated by at least an integrator. This addition of an integrator changes the meaning of safety for the Stefan model. Even though just a single state variable has been added to a model of infinite dimension, the dimension did increase by one and with that the geometry of the safe set that needs to be maintained forward invariant changes. An alternative perspective is that, with the addition of an integrator, the relative degrees of the CBFs for the Stefan system have increased by one. 

Hence, a non-trivial modification to the feedback in \cite{Shumon19journal} is needed in order to retain what has been achieved there but in the presence of an integrator at the input. This retention of safety is accomplished by suitably employing, and extending, the backstepping design idea for safety from \cite{krstic2006nonovershooting}. 

\paragraph{QP modifications for  Stefan PDE and  actuator ODE} \label{sec:Intro-QP} 

The current paper differs from \cite{refId0,karafyllis2021global,karafyllis2021spillfree} not only in the sense of the physics considered---thermal here and bio-populations, gas dynamics, and free-surface flows in those papers. Another difference is that what was achieved safely in those papers  is stabilization, whereas in this paper we tackle a more general objective of safeguarding the system from the potentially unsafe input being applied by an external operator---similar to safeguarding a vehicle, using dynamic stability control, from the unsafe, overly aggressive actions of an inexperienced driver who doesn't have a good feel for his car's dynamics. We let the operator manipulate the input voltage to the liquid-solid system for as long as his inputs won't create solid islands in the liquid. But for inputs that would violate the phase change constraints, we override the operator with a feedback law that guarantees safety. 

This safety override is performed by a QP-feedback modification of the nominal input. Since this QP modification relies on a backstepping transformation to introduce a CBF of relative degree one to which QP can be applied, we call this a ``QP-backstepping-CBF'' design. 

In the Stefan model with actuator dynamics we have two CBFs: one of relative degree one representing the heat flux at the boundary, which must be maintained positive so that the temperature of the liquid remains above freezing and the conditions of the ``maximum principle'' for the heat PDE are met, and the second CBF which is of relative degree two and incorporates the position of the liquid-solid interface and the deviation of the actual temperature from the freezing temperature. A single input must keep both of these CBFs positive. This is achieved by two QP modifications, which, when combined, give a physically natural feedback which saturates the operator's command between two safety feedback laws, one ensuring that the system is heated enough and the other ensuring that it is not overheated. 

\paragraph{Stefan model of phase change and its control}

The Stefan model of the liquid-solid phase change has been widely utilized in various kinds of science and engineering processes, including sea-ice melting and freezing~\cite{koga2019arctic}, continuous casting of steel \cite{petrus12}, cancer treatment by cryosurgeries \cite{Rabin1998}, additive manufacturing for materials of both polymer \cite{koga2019polymer} and metal \cite{koga20laser}, crystal growth~\cite{ecklebe2021toward}, and thermal energy storage systems \cite{koga20experiment}. 
Apart from the thermodynamical model, the Stefan PDE-ODE systems have been employed to model several chemical, electrical, social, and financial dynamics such as lithium-ion batteries \cite{koga2017battery}, tumor growth process \cite{Friedman1999}, neuron growth \cite{demir2021neuron}, domain walls in ferroelectric thin films \cite{mcgilly2015}, spreading of invasive species in ecology \cite{Du2010speading}, information diffusion on social networks \cite{Wang20book}, and the American put option \cite{Chen2008}.

To our knowledge, efforts on control of the Stefan problem on the full PDE-ODE model commence with the motion planning results in \cite{Hill1967ParabolicEI,COCV_2003__9__275_0}.  
 Approaches employing finite-dimensional approximations are \cite{daraoui2010model,armaou2001robust}.
 For control objectives, infinite-dimensional approaches have been used for stabilization of  the temperature profile and the moving interface of a 1D Stefan problem, such as enthalpy-based feedback~\cite{petrus12} and geometric control~\cite{maidi2014}.  These works designed control laws ensuring the asymptotical stability of the closed-loop system in the ${L}_2$ norm. However, the results in \cite{maidi2014} are established based on the assumptions on the liquid temperature being greater than the melting temperature, which must be ensured by showing the positivity of the boundary heat input. 
 
Recently, boundary feedback controllers for the Stefan problem have been designed via a ``backstepping transformation" \cite{PDEbook,Delaybook,Smyshlyaev2004} which has been used for many other classes of infinite-dimensional systems. For instance, \cite{Shumon19journal} designed a state feedback control law, an observer design, and the associated output feedback control law by introducing a nonlinear backstepping transformation for moving boundary PDE, which achieved  exponential stabilization of the closed-loop system without imposing any {\em a priori} assumption, with ensuring the robustness with respect to the parameters' uncertainty. \cite{koga_2019delay} developed a delay-compensated control for the Stefan problem with proving the robustness to the delay mismatch. Numerous other results can be found in \cite{KKbook2021,koga2021control}.   

\paragraph{Results and contributions of the paper} 
This paper develops two safe control designs for the Stefan PDE-ODE system with actuator dynamics by employing CBF technique. As remarked in Section \ref{sec:Intro-QP}, three CBFs are considered, two of which are given by the physical restrictions in the Stefan model, and the other one is designed to deal with a CBF with relative degree two. First, we develop a non-overshooting control to regulate the liquid-solid interface position at a setpoint position, with satisfying the positivity of the all CBFs. Next, we design a QP-Backstepping-CBF of safety filter for a given nominal or operator input, with ensuring that the closed-loop system satisfies the positivity of all CBFs. Then, revisiting the non-overshooting control, the global exponential stability of the closed-loop system is proven via PDE-backstepping and Lyapunov method. This paper extends our conference paper \cite{koga2022ACC} by 
\begin{itemize}
    \item developing the safe control under the additional constraints from above on states via both nonovershooting and QP control, 
    \item deriving the nonovershooting regulation for the Stefan system with a higher-order actuator dynamics, 
    \item ensuring the safety constraints for the two-phase Stefan system which incorporates the dynamics of the solid phase with a disturbance under the nonovershooting control,
    \item and applying the two safe control methods to a process in metal additive manufacturing of a titanium alloy through numerical simulation. 
\end{itemize}

\paragraph{Organization}

This paper is organized as follows. The one-phase Stefan model with actuator dynamics of the first order and its state constraints are provided in Section \ref{sec:model}. The nonovershooting regulation is derived in Section \ref{sec:nonov}, and a QP-backstepping safety filter is designed in Section \ref{sec:QP}, with providing the theorem. The stability analysis under the nonovershooting regulation is given in Section \ref{sec:stability}. The safe control under the additional constraints from above on states is developed in Section \ref{sec:above}, and the nonovershooting regulation for a higher order actuator dynamics is shown in Section \ref{sec:high}. The two-phase Stefan problem with unknown disturbance is presented in Section \ref{sec:twophase}. The application to metal additive manufacturing is presented in Section \ref{sec:AM}. The paper ends with the concluding remarks in Section \ref{sec:conclusion}.  

\paragraph{Notation and definitions}
Throughout this paper, partial derivatives and $L_2$-norm are denoted as $u_{t}(x,t) = \frac{\partial u}{\partial t} (x,t)$, $u_{x}(x,t) = \frac{\partial u}{\partial x} (x,t)$, and $||u [t]|| = \sqrt{\int_0^{s(t)} u(x,t)^2 dx}$, where $u[t]$ is a function defined on $[0, s(t)]$ with real values defined by $(u[t])(x) = u(x,t) $ for all $x \in [0, s(t)]$. $\R_+ := [0, +\infty)$. $C^0(U;\Omega)$ is the class of continuous mappings on $U \subseteq \R^n$, which takes values in $\Omega \subseteq \R$, and $C^k(U;\Omega)$, where $k \geq 1$ is the class of continuous functions on $U$, which have continuous derivatives of order $k$ on $U$ and takes values in $\Omega$.


\section{Stefan Model and Constraints} \label{sec:model} 

\begin{figure}[t]
\centering
\includegraphics[width=0.99\linewidth]{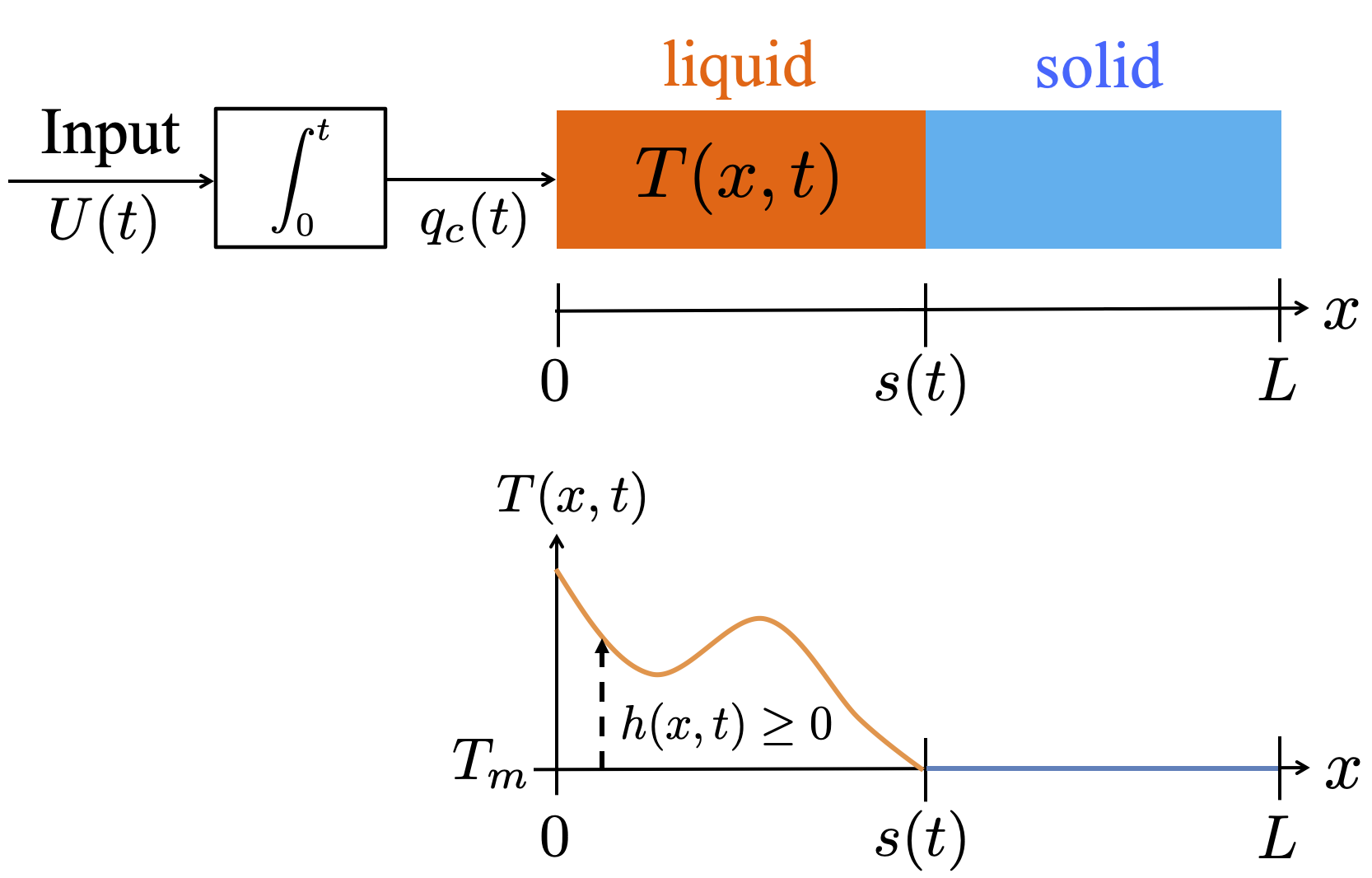}\\
\caption{Schematic of the one-phase Stefan problem with actuator dynamics.}
\label{fig:stefan}
\end{figure}

Consider the melting or solidification  in a material of length $L$ in one dimension (see  Fig.~\ref{fig:stefan}). Divide the domain $[0, L]$ into two time-varying sub-intervals: $[0,s(t)]$, which contains the liquid phase, and $[s(t),L]$, that contains the solid  phase. Let heat flux enter at boundary $x=0$, which affects the distal liquid-solid interface dynamics through heat propagation across liquid phase. Evidently, the heat equation alone does not completely describe the phase transition and must be coupled with the dynamics of the moving boundary. 

The energy conservation and heat conduction laws yield the heat equation of  the liquid phase, the boundary conditions, and the initial values as follows:
\begin{align}\label{eq:stefanPDE}
T_t(x,t)&=\alpha T_{xx}(x,t), \hspace{2mm}  \textrm{for} \hspace{2mm} t > 0, \hspace{2mm} 0< x< s(t), \\ 
\label{eq:stefancontrol}
-k T_x(0,t)&=q_{\rm c}(t),  \hspace{2mm} \textrm{for} \hspace{2mm} t >0,\\ \label{eq:stefanBC}
T(s(t),t)&=T_{{\rm m}}, \hspace{2mm} \textrm{for} \hspace{2mm} t >0, \\
\label{eq:stefanIC}
 s(0) &=  s_0, \textrm{and } T(x,0) = T_0(x),  \textrm{for } x \in (0, s_0],
\end{align}
where $ \alpha :=\fr{k}{\rho C_p}$, and $T(x,t)$,  ${q}_c(t)$,  $\rho$, $C_p$, and $k$ are the distributed temperature of the liquid phase, the boundary heat flux, the liquid density, the liquid heat capacity, and the liquid heat conductivity, respectively. 

In this paper, we model the heater, which produces the heat flux $q_{\rm c}(t)$, as actuated by a voltage input $U(t)$: 
\begin{align} \label{eq:stefan-actuator} 
    \dot q_{c}(t) =  U(t).  
\end{align}
The local energy balance at the liquid-solid interface $x=s(t)$ is 
\begin{align}\label{eq:stefanODE}
\dot s(t)=- \beta T_x(s(t),t) , 
\end{align}
where $\beta:= \fr{k}{\rho \Delta H^*}$, and $\Delta H^*$ represents the latent heat of fusion. In \eqref{eq:stefanODE}, the left hand side represents the latent heat, and the right hand side represent the heat flux by the liquid phase. 
As the moving interface  $s(t)$ depends on the temperature, the problem defined in  \eqref{eq:stefanPDE}--\eqref{eq:stefanODE}  is nonlinear.
The temperature in the solid  is assumed at melting. 

There are two requirements for the validity of the model \eqref{eq:stefanPDE}-\eqref{eq:stefanODE}:
\begin{align}\label{temp-valid}
T(x,t) \geq& T_{{\rm m}}, \quad  \forall x\in(0,s(t)), \quad \forall t>0, \\
\label{int-valid}0 < s(t)<  &L, \quad \forall t>0. 
\end{align}
First, the trivial: the liquid phase is not frozen, i.e., the liquid temperature $T(x,t)$ is greater than the melting temperature $T_{\rm m}$. Second, equally trivially, the material is not entirely in one phase, i.e., the interface remains inside the material's domain. These physical conditions are also required for the existence and uniqueness of  solutions \cite{alexiades2018mathematical}.
Hence, we assume the following for the initial data. 

\begin{assum}\label{ass:initial} 
$0 < s_0 < L$, $T_0(x) \in C^1([0, s_0];[T_{\rm m}, +\infty))$ with $T_0(s_0) = T_{\rm m}$.
 \end{assum}

\begin{lemma}\label{lem1}
With Assumption \ref{ass:initial}, if $q_{\rm c}(t)$ is a bounded piecewise continuous non-negative heat function, i.e.,
\begin{align}
q_{{\rm c}}(t) \geq 0,  \quad \forall t\geq 0,  
\end{align}
then there exists a unique classical solution for the Stefan problem \eqref{eq:stefanPDE}--\eqref{eq:stefanODE}, which satisfies \eqref{temp-valid}, and 
\begin{align} \label{eq:sdot-pos} 
    \dot s(t) \geq 0, \quad \forall t \geq 0. 
\end{align}

\end{lemma}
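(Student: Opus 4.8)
The plan is to establish the three claims---existence/uniqueness of a classical solution, the lower temperature bound \eqref{temp-valid}, and the monotonicity \eqref{eq:sdot-pos}---by invoking the classical theory for the one-phase Stefan problem and then exploiting the maximum principle for the heat equation. For existence and uniqueness, I would cite the standard result for the one-phase Stefan problem with prescribed non-negative flux at the fixed boundary (as in \cite{alexiades2018mathematical}): under Assumption \ref{ass:initial}, with $T_0 \in C^1([0,s_0];[T_{\rm m},\infty))$ satisfying the compatibility condition $T_0(s_0)=T_{\rm m}$, and with $q_{\rm c}$ bounded, piecewise continuous and non-negative, there exists a unique classical solution $(T,s)$ on some maximal time interval, with $s \in C^1$ and $T$ smooth on the liquid region. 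This is the part I expect to delegate entirely to the cited literature rather than reprove.

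Next I would prove the temperature bound \eqref{temp-valid} via the parabolic maximum principle. Consider $v(x,t):=T(x,t)-T_{\rm m}$, which satisfies the same heat equation $v_t=\alpha v_{xx}$ on the moving domain $0<x<s(t)$. At the interface $x=s(t)$ we have $v(s(t),t)=0$ by \eqref{eq:stefanBC}, and at $x=0$ the Neumann data is $-k v_x(0,t)=q_{\rm c}(t)\geq 0$, so $v_x(0,t)\leq 0$. Initially $v(x,0)=T_0(x)-T_{\rm m}\geq 0$ by Assumption \ref{ass:initial}. I would argue that a negative interior minimum of $v$ is impossible: by the strong maximum principle, a minimum must be attained either on the parabolic boundary or propagate from it, and the boundary conditions (non-negative initial data, zero Dirichlet data at the interface, and the sign of the Neumann flux at $x=0$ preventing a minimum there by Hopf's lemma) force $v\geq 0$ throughout, giving \eqref{temp-valid}.

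Finally, for \eqref{eq:sdot-pos}, I would combine the interface condition \eqref{eq:stefanODE}, $\dot s(t)=-\beta T_x(s(t),t)$ with $\beta>0$, with the boundary behavior of $v$ at $x=s(t)$. Since $v(s(t),t)=0$ and $v\geq 0$ in the interior just established, the interface is a minimum point of $v(\cdot,t)$, so the outward spatial derivative satisfies $v_x(s(t),t)=T_x(s(t),t)\leq 0$; indeed Hopf's lemma gives the strict inequality unless $v\equiv 0$. Hence $\dot s(t)=-\beta T_x(s(t),t)\geq 0$, which is exactly \eqref{eq:sdot-pos}.

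The main obstacle is making the maximum-principle argument rigorous on a \emph{moving} domain $[0,s(t)]$ rather than a fixed cylinder: the parabolic boundary now includes the free-boundary curve $x=s(t)$, and one must verify that the classical maximum principle and Hopf's lemma apply with the requisite regularity of $s$ and $T$ up to this curve. I would handle this by appealing to the regularity guaranteed by the existence theory (so that $T$ is $C^{2,1}$ in the interior and continuous up to the closure with $s\in C^1$), which legitimizes evaluating $T_x(s(t),t)$ and applying Hopf's lemma along the free boundary. A secondary subtlety is the mere piecewise continuity of $q_{\rm c}$, which introduces possible jumps in the Neumann data; since the sign condition $q_{\rm c}\geq 0$ holds at every time and the solution remains classical across the (finitely many on any bounded interval) discontinuities, the minimum-principle conclusion is unaffected.
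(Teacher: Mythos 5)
Your proposal is correct and follows exactly the route the paper itself takes: the paper proves Lemma \ref{lem1} simply by deferring to the maximum principle for parabolic PDEs and Hopf's lemma as presented in the cited reference \cite{Gupta03} (with the classical-solution definition delegated to Appendix A of \cite{Shumon19journal}). Your write-up merely spells out the details of that standard argument---nonnegativity of $v=T-T_{\rm m}$ via the minimum principle on the moving domain, then $\dot s(t)=-\beta T_x(s(t),t)\geq 0$ since the interface is a minimum point---which is precisely the content the paper outsources to the citation.
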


The definition of the classical solution of the Stefan problem is given in 
Appendix A of \cite{Shumon19journal}. The proof of Lemma \ref{lem1} is by maximum principle for parabolic PDEs and Hopf's lemma, as shown in \cite{Gupta03}.






\section{Nonovershooting Regulation by Backstepping for Multiple CBFs} \label{sec:nonov} 

The regulation of the interface $s(t)$ to a desired setpoint  $s_{\rm r}$ is a crucial task in several applications that involve thermal phase change, such as creating a layer of desired thickness in metal additive manufacturing \cite{koga20laser}. However,  the actuator dynamics given in \eqref{eq:stefan-actuator} introduce a major extra challenge to achieving  setpoint regulation while guaranteeing the safety constraints \eqref{temp-valid}, \eqref{int-valid}. 



In this section we regulate $s(t)$ to $s_{\rm r}$, as well as regulating $T(x,t)$ to $T_{\rm m}$ and $q_{\rm c}$ to zero. This is an equilibrium at the boundary of the safe set. 
Such a mix of stabilization and safety control problems is called ``non-overshooting control'' \cite{krstic2006nonovershooting}. In addition to the two physically imposed CBFs, with backstepping we design a third CBF to ensure safety but without having to  additionally restrict the initial conditions, which is common in other CBF designs. Such an addition of a CBF, ten years after  \cite{krstic2006nonovershooting}, was independently discovered in the format of hi-rel-deg CBFs in \cite{nguyen2016exponential}.

In the next section, we introduce a ``QP-Backstepping-CBF design,'' to allow a safe application of a `nominal feedback,' possibly distinct from the setpoint regulating feedback, or an application of an external operator open-loop input, using a QP selection between the nominal input and backstepping-designed safeguards. 

Let $h_1(t)$, $h_2(t)$, $h_3(t)$, and $h(x,t)$ be CBFs defined by 
\begin{align}
    h_1(t) :&= \sigma(t)  \nonumber\\
    & =  -\left( \fr{k}{\alpha} \int_0^{s(t)} (T(x,t) - T_m) dx + \frac{k}{\beta} (s(t) - s_r) \right), \label{eq:h1_def} \\
    h_2(t) &= q_{\rm c}(t), \label{eq:h2_def}\\
    h_3(t) &= - q_{\rm c}(t) + c_1 \sigma(t), \label{eq:h3_def}\\
    h(x,t) &= T(x,t) - T_{\rm m}. \label{eq:h_def}
\end{align}
The CBF $h_1$ in \eqref{eq:h1_def} represents the ``energy deficit'' (positive, thermal plus potential) relative to the setpoint equilibrium. The added CBF \eqref{eq:h3_def}, seemingly redundant because $h_3=-h_2+c_1 h_1$, represents a backstepping transformation and is crucial for ensuring that $h_1$ is maintained positive. 

\begin{lemma} \label{lem:overshoot} 
    With Assumption \ref{ass:initial}, suppose that the following conditions hold: \begin{align}
     \label{eq:h1-pos}   h_1(t) \geq 0, \\
     \label{eq:h2-pos}     h_2(t) \geq 0,  
    \end{align}
    for all $t \geq 0$. Then, it holds that 
    \begin{align} \label{eq:hxt-pos} 
    h(x,t) \geq & 0, \quad \forall x \in (0, s(t)), \quad \forall t \geq 0, \\
      0< s_0 \leq   s(t) \leq & s_r, \quad \forall t \geq 0.  \label{eq:s-ineq} 
    \end{align}
    \end{lemma}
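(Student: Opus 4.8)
The plan is to recognize that the hypothesis $h_2(t)\geq 0$ in \eqref{eq:h2-pos} is precisely the non-negativity of the boundary heat flux $q_{\rm c}(t)$ demanded by Lemma \ref{lem1}, so the maximum-principle machinery of that lemma can be invoked directly; I would then extract the two one-sided bounds on $s(t)$ separately, obtaining the lower bound from the monotonicity of the interface and the upper bound from the sign constraint $h_1(t)\geq 0$.

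First I would observe that $h_2(t)=q_{\rm c}(t)$ by \eqref{eq:h2_def}, so \eqref{eq:h2-pos} asserts $q_{\rm c}(t)\geq 0$ for all $t\geq 0$. Under Assumption \ref{ass:initial}, Lemma \ref{lem1} then guarantees a unique classical solution satisfying the validity condition \eqref{temp-valid}, that is $T(x,t)\geq T_{\rm m}$, which is exactly $h(x,t)=T(x,t)-T_{\rm m}\geq 0$ by \eqref{eq:h_def}; this establishes \eqref{eq:hxt-pos}. Lemma \ref{lem1} also yields $\dot s(t)\geq 0$ through \eqref{eq:sdot-pos}, so $s(t)$ is nondecreasing; since $s(0)=s_0>0$ by Assumption \ref{ass:initial}, this gives $s(t)\geq s_0>0$, which is the lower half of \eqref{eq:s-ineq}.

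For the upper bound $s(t)\leq s_r$, I would use the remaining hypothesis $h_1(t)\geq 0$. Expanding \eqref{eq:h1-pos} through the definition \eqref{eq:h1_def} and rearranging gives
\begin{align}
    \frac{k}{\beta}\bigl(s(t)-s_r\bigr) \leq -\frac{k}{\alpha}\int_0^{s(t)} \bigl(T(x,t)-T_{\rm m}\bigr)\,dx .
\end{align}
The integrand on the right is nonnegative by the temperature positivity just established, and $k/\alpha>0$, so the right-hand side is $\leq 0$; since $k/\beta>0$, this forces $s(t)\leq s_r$, completing \eqref{eq:s-ineq}.

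The argument is a short chain of implications, so I expect no serious analytical obstacle; the one point that requires care is the logical ordering. The upper bound on $s$ relies on the temperature positivity (needed to sign the integral term), which itself rests only on $h_2\geq 0$ via Lemma \ref{lem1} and not on any prior knowledge of $s(t)\leq s_r$. Thus there is no circular dependence: the temperature bound is obtained first and unconditionally, and the interface confinement follows afterward from $h_1\geq 0$.
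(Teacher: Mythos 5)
Your proof is correct and takes essentially the same approach as the paper: the paper's entire proof is the one-line remark that Lemma \ref{lem:overshoot} ``is proven with Lemma \ref{lem1},'' and your argument---invoking Lemma \ref{lem1} under the hypothesis $h_2(t)=q_{\rm c}(t)\geq 0$ to obtain temperature positivity and $\dot s(t)\geq 0$, then using the nonnegativity of the integral term in $h_1(t)\geq 0$ to force $s(t)\leq s_{\rm r}$---is precisely the filled-in version of that remark. The explicit check that there is no circularity (temperature positivity rests only on $h_2\geq 0$, not on $s\leq s_{\rm r}$) is a useful addition the paper leaves implicit.
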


Lemma \ref{lem:overshoot} is proven with Lemma \ref{lem1}. To validate the conditions \eqref{eq:h1-pos} and \eqref{eq:h2-pos} for all $t \geq 0$, at least the conditions must hold at $t = 0$, which necessitate the following assumptions on the initial condition and the setpoint restriction. 

\begin{assum} \label{ass:h2} 
$0 \leq q_{\rm c}(0) $. 
\end{assum}

\begin{assum}\label{ass:h1} 
The setpoint position $s_{\rm r}$ is chosen to satisfy 
\begin{align}
s_{0} +    \fr{\beta}{\alpha} \int_0^{s_0} (T_0(x) - T_{\rm m}) dx  \leq s_{\rm r} < L. 
\end{align}
\end{assum}

Under these assumptions, we perform a design of a regulating control $U(t)$ so that the conditions \eqref{eq:h1-pos}, \eqref{eq:h2-pos} hold. Taking the first and second time derivatives of \eqref{eq:h1_def}, we have 
\begin{align}
    \dot h_1 &= -h_2 = - q_c, \\
- \ddot h_1 = \dot h_2 &= U. 
\end{align}
Thus, ``energy deficit'' CBF defined by \eqref{eq:h1_def} has relative degree 2, from the voltage input, according to the perspective in  \cite{nguyen2016exponential}, which inherits the backstepping change of variable \eqref{eq:h3_def} from \cite{krstic2006nonovershooting}.

The double integrator  $U\mapsto h_1$ is not unlike a model of adaptive/distance cruise control (ACC), studied in many papers on CBFs and QP. The single integrator  $U\mapsto h_2$ is not unlike a classical (velocity) cruise control problem. Hence, in the Stefan model, achieving safety amounts to simultaneously maintaining safety in terms of both distance and velocity cruise control, without imposing additional restrictions on the initial conditions of the position-like and velocity-like states through design. In addition, in the Stefan model, on top of the two CBFs of relative degrees one and two, one has to maintain the positivity of the CBF  \eqref{eq:h_def}, i.e., to ensure \eqref{eq:hxt-pos}, where the temperature field plays the role of zero dynamics of infinite dimension. 

We design a "non-overshooting control" \cite{krstic2006nonovershooting}, denoted as $U=U^*(\sigma,q_{\rm c})$, so that the following relationships hold: 
\begin{align} 
\label{eq:ddth1}    \dot h_1 &= -c_1 h_1 + h_3 , \\
   \label{eq:ddth2} \dot h_2 &= - c_1 h_2 + c_2 h_3, \\
    \label{eq:ddth3} \dot h_3 &= - c_2 h_3. 
\end{align}
Indeed, $h_3$ in \eqref{eq:h3_def} is defined so that \eqref{eq:ddth1} holds, hence, it suffices to design the control so \eqref{eq:ddth3} holds. 
Taking the time derivatives of \eqref{eq:h3_def}, one 
gets 
\begin{align} \label{eq:nonover} 
    U^*(\sigma,q_{\rm c}) =  - (c_1 + c_2) q_{\rm c} + c_1 c_2 \sigma  . 
\end{align}
With \eqref{eq:nonover}, we  see that \eqref{eq:ddth2} also holds. 

The solution to the linear differential equations \eqref{eq:ddth1}--\eqref{eq:ddth3} is analytically obtained by  
\begin{align} \label{eq:h3-sol} 
    h_3(t) = & h_3(0) e^{- c_2 t}, \\
    h_1(t) 
    = &  h_1(0) e^{-c_1 t} +  \frac{h_3(0) }{c_2 - c_1} ( e^{-c_1 t}  - e^{  - c_2 t} ) , \\
    h_2(t) = & h_2(0) e^{-c_1 t} +  \frac{c_2 h_3(0)  }{c_2 - c_1} ( e^{-c_1 t}  - e^{  - c_2 t} ) . \label{eq:h2-sol}
\end{align}
With Assumptions \ref{ass:h2} and \ref{ass:h1},  $h_1(0) \geq 0$ and $h_2(0) \geq 0$ hold. Thus, if $h_3(0) \geq 0$, one can see that the solutions \eqref{eq:h3-sol}--\eqref{eq:h2-sol} are all nonnegative. For $h_3(0) \geq 0$ to hold, we choose the control gain $c_1$ as
\begin{align} \label{eq:c1_condition} 
     c_1 \geq \fr{q_{\rm c}(0)}{\sigma(0)}
\end{align}
and obtain
the following lemma.

\begin{lemma} \label{lem:safety-up} 
    Let Assumptions \ref{ass:initial}--\ref{ass:h1} hold. Then, the closed-loop system consisting of the plant \eqref{eq:stefanPDE}--\eqref{eq:stefanODE} with the non-overshooting control law \eqref{eq:nonover}, where the gain satisfies \eqref{eq:c1_condition}, guarantees the following to hold:  
    \begin{align}
    h_1(t) \geq 0, \quad h_2(t) \geq 0, \quad h_3(t) \geq 0, \quad \forall t \geq 0.
\end{align}
Moreover, \eqref{eq:hxt-pos} and \eqref{eq:s-ineq} hold. 
\end{lemma}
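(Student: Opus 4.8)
The plan is to reduce the claim to a finite-dimensional positivity argument on the closed-form trajectories \eqref{eq:h3-sol}--\eqref{eq:h2-sol}, and then to transfer positivity of $h_1,h_2$ to the distributed state by invoking Lemma \ref{lem:overshoot}. In other words, the only genuinely new content beyond the earlier lemmas is (i) checking that all three initial values are nonnegative, and (ii) an elementary sign analysis showing the linear cascade preserves nonnegativity.

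First, I would establish nonnegativity of the three initial values. By Assumption \ref{ass:h2}, $h_2(0)=q_{\rm c}(0)\geq 0$ is immediate. For $h_1(0)=\sigma(0)$, I would rewrite \eqref{eq:h1_def} at $t=0$ as $\sigma(0)=\tfrac{k}{\beta}\big[(s_{\rm r}-s_0)-\tfrac{\beta}{\alpha}\int_0^{s_0}(T_0(x)-T_{\rm m})\,dx\big]$ and observe that Assumption \ref{ass:h1} is exactly the statement that the bracket is nonnegative, whence $h_1(0)\geq 0$. For $h_3(0)=-q_{\rm c}(0)+c_1\sigma(0)$, the gain condition \eqref{eq:c1_condition} gives $c_1\sigma(0)\geq q_{\rm c}(0)$, so $h_3(0)\geq 0$.

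Second, I would note that along the closed loop with $U=U^*$ from \eqref{eq:nonover} the triple $(h_1,h_2,h_3)$ obeys the decoupled linear cascade \eqref{eq:ddth1}--\eqref{eq:ddth3}, whose solutions are precisely \eqref{eq:h3-sol}--\eqref{eq:h2-sol}; the sole input from the PDE is the identity $\dot h_1=-q_{\rm c}$, which follows by differentiating \eqref{eq:h1_def} via the Leibniz rule and substituting \eqref{eq:stefanPDE}, \eqref{eq:stefanODE}, and \eqref{eq:stefancontrol}, the $\dot s$ contributions cancelling. The crux of the argument is then the sign of the common factor $g(t):=\tfrac{1}{c_2-c_1}\big(e^{-c_1 t}-e^{-c_2 t}\big)$ multiplying $h_3(0)$ in both $h_1(t)$ and $h_2(t)$. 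I would verify $g(t)\geq 0$ for every $t\geq 0$ by a case split: if $c_2>c_1$ then $c_2-c_1>0$ and $e^{-c_1 t}\geq e^{-c_2 t}$; if $c_2<c_1$ both factors reverse sign; and the degenerate case $c_1=c_2$ is the limit $g(t)=t\,e^{-c_1 t}\geq 0$. Since $h_3(0)\geq 0$, $h_1(0)\geq 0$, $h_2(0)\geq 0$ and $c_2\geq 0$, every term of \eqref{eq:h3-sol}--\eqref{eq:h2-sol} is then nonnegative, giving $h_1(t),h_2(t),h_3(t)\geq 0$ for all $t\geq 0$.

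Finally, having $h_2(t)=q_{\rm c}(t)\geq 0$, I would invoke Lemma \ref{lem1} to guarantee global existence of the classical Stefan solution and validity of \eqref{temp-valid}, and then apply Lemma \ref{lem:overshoot} with $h_1(t)\geq 0$ and $h_2(t)\geq 0$ to obtain the remaining claims \eqref{eq:hxt-pos} and \eqref{eq:s-ineq}. The step I expect to require the most care is the well-posedness bookkeeping, since the representations \eqref{eq:h3-sol}--\eqref{eq:h2-sol} presuppose that the PDE--ODE solution exists (so that $\sigma$ and the relation $\dot h_1=-q_{\rm c}$ are meaningful), while global existence is itself furnished by Lemma \ref{lem1} only once $q_{\rm c}=h_2\geq 0$ is known. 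I would close this loop by a continuation argument: on the maximal interval of existence the linear cascade \eqref{eq:ddth1}--\eqref{eq:ddth3} holds and forces $h_2\geq 0$ there, whereupon Lemma \ref{lem1} precludes finite-time breakdown and extends the interval to $[0,\infty)$. The only other point demanding explicitness is the sign of $g(t)$, where a naive reading assuming $c_2>c_1$ would leave a gap, so I would treat all three cases.
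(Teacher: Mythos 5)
Your proposal is correct and follows essentially the same route as the paper: check $h_1(0)\geq 0$, $h_2(0)\geq 0$ from Assumptions \ref{ass:h2}--\ref{ass:h1} and $h_3(0)\geq 0$ from the gain condition \eqref{eq:c1_condition}, read nonnegativity off the explicit cascade solutions \eqref{eq:h3-sol}--\eqref{eq:h2-sol}, and then conclude \eqref{eq:hxt-pos} and \eqref{eq:s-ineq} via Lemma \ref{lem1} and Lemma \ref{lem:overshoot}. Your two refinements---the case split on the sign of $(e^{-c_1 t}-e^{-c_2 t})/(c_2-c_1)$ including the degenerate case $c_1=c_2$, and the continuation argument resolving the circularity between the closed-form representations and the existence guarantee of Lemma \ref{lem1}---are details the paper leaves implicit, and they only strengthen the argument.
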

We  show  stability  in Section \ref{sec:stability}. 


\section{QP-Backstepping-CBF Design of Safety Filter}\label{sec:QP}  

Here, we derive a safety filter to satisfy all CBF constraints for a given operator input. First, we aim to satisfy $h_3 \geq 0$, since it also ensures $h_1 \geq 0$ by the relation of \eqref{eq:ddth1}, as a designed exponential CBF. Namely, we satisfy
\begin{align} \label{eq:ddth3-ineq} 
    \dot h_3 \geq - c_2 h_3. 
\end{align}
Taking the time derivative of \eqref{eq:h3_def} and applying \eqref{eq:ddth3-ineq} leads to the condition of the input as 
\begin{align} \label{eq:Ut-low-ineq} 
    U \leq U^*(\sigma,q_{\rm c}),
\end{align}
where $U^*$ is given in \eqref{eq:nonover}. It remains to ensure $h_2 \geq 0$. Since both $h_1 \geq 0$ and $h_3 \geq 0$ are satisfied under the input condition \eqref{eq:Ut-low-ineq}, and taking into account the fact that $h_3$ can be written with respect to $h_1$ and $h_2$ as defined in \eqref{eq:h3_def}, we aim to satisfy the following inequality: 
\begin{align} \label{eq:ddth2-ineq} 
    \dot h_2 \geq - \bar c_1 h_2 + \bar c_2 h_1, 
\end{align}
for some $\bar c_1 >0$ and $\bar c_2 >0$, which ensures $h_2 \geq 0$. Taking the time derivative of \eqref{eq:h2_def}, and with \eqref{eq:stefan-actuator}, to make \eqref{eq:ddth2-ineq} hold, we arrive at the following condition on the input: 
\begin{align} \label{eq:Ut-up-ineq}
    U_*(\sigma,q_{\rm c}) \leq & U , \\
    U_*(\sigma,q_{\rm c}) &=  - \bar c_1 q_{\rm c} + \bar c_2 \sigma \label{eq:U-h2-const}
\end{align}
%
%
Comparing \eqref{eq:nonover} with \eqref{eq:U-h2-const}, for ensuring the feasibility of the two constrains \eqref{eq:Ut-low-ineq} and \eqref{eq:Ut-up-ineq}, it is sufficient to choose the gains in accordance with the following conditions:
\begin{align} \label{eq:c_bar_condition} 
    \bar c_1 \geq  c_1 + c_2, \quad 0 \leq \bar c_2 \leq c_1 c_2.  
\end{align}

By redefining the gain parameters from $(c_1, c_2, \bar c_1, \bar c_2)$ to $(k_1, k_2, \delta_1, \delta_2)$, one can show that \eqref{eq:nonover} and \eqref{eq:U-h2-const} with the condition \eqref{eq:c1_condition} are equivalent to the following formulation:  
\begin{eqnarray} \label{eq:U_star}
U_* &&= -(k_1+\delta_1) q_c + k_2\sigma \\  \label{eq:U^star}
U^* &&= - k_1 q_c + (k_2 + \delta_2) \sigma\,,
\end{eqnarray}
where 
\begin{align}
k_1 \geq q_c(0)/\sigma(0), \quad k_2 > 0, \label{eq:k-gain-cond} \\
\delta_1 \geq 0, \quad \delta_2 \geq 0 . \label{eq:del-gain-cond}
\end{align} 

Finally, a safety filter, for a given nominal or operator input $U_o(t) $, is designed, inspired by~\cite{ames2016control}, by solving the Quadratic Programming (QP)  problem\footnote{Strictly speaking, the constraints \eqref{U_*<U<U^*} should be written, following the Lie derivative conditions on the CBFs $h_2$ and $h_3$, as, respectively, $ u + \bar c_1 h_2 - \bar c_2 h_1 \geq 0$ and $ - u + c_2 h_3 - c_1 h_2\geq 0$. But we believe that \eqref{U_*<U<U^*} poses less of a chance to confuse the reader.}
\begin{align}
    U = \min_{u \in \R} |u - U_o |^2, \\
    \label{U_*<U<U^*}
\textrm{subject to } \quad   U_* \leq  u \leq & U^*. 
\end{align}
Applying the Karush-Kuhn-Tucker (KKT) optimality condition, the explicit solution is
\begin{align} \label{eq:safety-fin} 
    U  = \min\{ \max\{U_*, U_o\}, U^*\} . 
\end{align}
Since $U_*$ and $U^*$ are designed by backstepping, with an addition of a CBF to given CBFs, we call this safety filter a "QP-Backstepping-CBF" design. 

The cumbersome formula \eqref{eq:safety-fin} can  be rewritten with a saturation function on the operator input $U_o$ as 
\begin{equation} \label{eq:safety-fin-sat}
    U = \left\{
    \begin{array}{ll}
    U^*,& U_o>U^*\\
    U_o, & U_* \leq U_o \leq U^* \\
    U_*, & U_o < U_*\,
    \end{array}
    \right.
\end{equation}
The analysis and design above establish the following.
\begin{theorem} \label{thm:safety} 
Let Assumptions \ref{ass:initial}--\ref{ass:h1} hold. Consider the closed-loop system \eqref{eq:stefanPDE}--\eqref{eq:stefanODE} with QP safety control \eqref{eq:safety-fin-sat}, \eqref{eq:U_star}, and \eqref{eq:U^star}, under an arbitrary operator input $U_o(t)$, where the gain parameters are chosen to satisfy \eqref{eq:k-gain-cond} \eqref{eq:del-gain-cond}. Then, all CBFs defined as \eqref{eq:h1_def}--\eqref{eq:h_def} satisfy the constraints $h_1(t) \geq 0$, $h_2(t) \geq 0$ for all $t \geq 0$, and $h(x,t) \geq 0$ for all $x \in (0, s(t))$ and for all $ t \geq 0$.  
\end{theorem}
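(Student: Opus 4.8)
The plan is to establish the two scalar constraints $h_1(t)\ge0$ and $h_2(t)\ge0$ for all $t\ge0$ at the level of the finite-dimensional energy/actuator subsystem, and then to hand off to Lemma~\ref{lem:overshoot} for the infinite-dimensional constraint $h(x,t)\ge0$. First I would note that $(\sigma,q_{\rm c})$ obeys a \emph{closed} ODE, $\dot\sigma=-q_{\rm c}$ and $\dot q_{\rm c}=U$: the identity $\dot\sigma=-q_{\rm c}$ is obtained by differentiating \eqref{eq:h1_def} and using \eqref{eq:stefanPDE}, \eqref{eq:stefanBC}, \eqref{eq:stefanODE}, while the right-hand side of \eqref{eq:stefan-actuator} depends on the state only through $U_*,U^*$ in \eqref{eq:U_star}--\eqref{eq:U^star} and the saturation \eqref{eq:safety-fin-sat}. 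Since $U_*,U^*$ are affine in $(\sigma,q_{\rm c})$ and \eqref{eq:safety-fin} is a min/max of affine maps, the feedback is Lipschitz in the state, so this subsystem is well posed with absolutely continuous $h_1,h_2,h_3$ and the (in)equalities below holding a.e. The initial data are admissible: $h_1(0)=\sigma(0)\ge0$ by Assumption~\ref{ass:h1}, $h_2(0)=q_{\rm c}(0)\ge0$ by Assumption~\ref{ass:h2}, and, writing $U^*,U_*$ in the equivalent form \eqref{eq:nonover}, \eqref{eq:U-h2-const} (with real $c_1,c_2>0$, consistent since $(c_1+c_2)^2\ge4c_1c_2$), $h_3(0)=-q_{\rm c}(0)+c_1\sigma(0)\ge0$ by \eqref{eq:c1_condition}/\eqref{eq:k-gain-cond}.

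The crux is that the two one-sided input bounds decouple in a definite order. The outer minimum in \eqref{eq:safety-fin} guarantees $U\le U^*$ \emph{unconditionally}, i.e., \eqref{eq:Ut-low-ineq} holds for every $U_o$. Differentiating \eqref{eq:h3_def} and using this bound yields \eqref{eq:ddth3-ineq}, $\dot h_3\ge-c_2h_3$, so by comparison and $h_3(0)\ge0$ we get $h_3(t)\ge h_3(0)e^{-c_2t}\ge0$ for all $t$. Crucially, \eqref{eq:ddth1} is an \emph{identity} (it is \eqref{eq:h3_def} rearranged via $\dot h_1=-h_2$, independent of $U$), so $\dot h_1=-c_1h_1+h_3\ge-c_1h_1$, and with $h_1(0)\ge0$ the comparison lemma gives $h_1(t)\ge0$ for all $t$. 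Thus $h_1\ge0$ is secured \emph{without} any feasibility hypothesis on the QP.

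The second bound is more delicate, because $U\ge U_*$ (equivalently \eqref{eq:Ut-up-ineq}) holds only when the QP is feasible, i.e., when $U_*\le U^*$; from \eqref{eq:U_star}--\eqref{eq:U^star}, $U^*-U_*=\delta_1q_{\rm c}+\delta_2\sigma=\delta_1h_2+\delta_2h_1$, which by \eqref{eq:del-gain-cond} is nonnegative once $h_2\ge0$ (as $h_1\ge0$ is already known). I would close this loop by a forward-invariance argument. Let $[0,T^*)$ be the maximal interval on which $h_2\ge0$; it is nonempty since $h_2(0)\ge0$. On $[0,T^*)$ the QP is feasible, so \eqref{eq:safety-fin-sat} enforces $U\ge U_*$ and hence \eqref{eq:ddth2-ineq}, $\dot h_2\ge-\bar c_1h_2+\bar c_2h_1$. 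Multiplying by the integrating factor $e^{\bar c_1t}$ and using $h_1\ge0$, $\bar c_2\ge0$ gives $\frac{d}{dt}\!\left(h_2e^{\bar c_1t}\right)\ge0$, whence $h_2(t)\ge h_2(0)e^{-\bar c_1t}\ge0$ on $[0,T^*)$. By continuity this bound persists at $T^*$, so if $T^*<\infty$ the solution continues with $h_2\ge0$ beyond $T^*$, contradicting maximality; therefore $T^*=\infty$ and $h_2(t)\ge0$ for all $t\ge0$.

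Finally, with $h_1(t)\ge0$ and $h_2(t)=q_{\rm c}(t)\ge0$ for all $t\ge0$, Lemma~\ref{lem1} provides the classical solution of \eqref{eq:stefanPDE}--\eqref{eq:stefanODE}, and Lemma~\ref{lem:overshoot} then delivers $h(x,t)\ge0$ on $(0,s(t))$ for all $t\ge0$ (together with $0<s_0\le s(t)\le s_{\rm r}$), completing the proof. I expect the main obstacle to be precisely the coupling between QP feasibility and the positivity of $h_2$: one cannot simply read $U\in[U_*,U^*]$ off \eqref{eq:safety-fin-sat}, since $U_*\le U^*$ is \emph{itself} equivalent to $\delta_1h_2+\delta_2h_1\ge0$. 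The resolution is the asymmetry exploited above—that $h_1\ge0$ (hence partial feasibility through $\delta_2h_1$) is unconditional, which is exactly what the backstepping CBF $h_3$ and its initial constraint \eqref{eq:c1_condition} buy—so that only the forward-invariance of $\{h_2\ge0\}$ remains, and that is handled by the integrating-factor/boundary argument.
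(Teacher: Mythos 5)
Your proposal follows the same route as the paper's own argument in Section~\ref{sec:QP}: exploit the saturation structure of \eqref{eq:safety-fin-sat} to get $U\le U^*$ unconditionally, hence $\dot h_3\ge -c_2h_3$ and $h_3\ge0$, hence $h_1\ge0$ through the identity $\dot h_1=-c_1h_1+h_3$; then obtain $h_2\ge0$ from $U\ge U_*$; finally invoke Lemmas~\ref{lem1} and \ref{lem:overshoot} for $h(x,t)\ge0$ and \eqref{eq:s-ineq}. Where you go beyond the paper is in recognizing that $U\ge U_*$ holds only when the QP is feasible, and that feasibility, $U^*-U_*=\delta_1h_2+\delta_2h_1\ge0$, is itself state-dependent---the paper simply asserts feasibility from the gain ordering \eqref{eq:c_bar_condition}, tacitly using the very positivity being proved. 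Identifying and attacking that circularity is genuine added value.

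There are, however, two gaps. First, your maximal-interval argument fails exactly at the boundary case permitted by Assumption~\ref{ass:h2}, namely $q_{\rm c}(0)=0$: your bound gives only $h_2(T^*)\ge h_2(0)e^{-\bar c_1T^*}=0$, and continuity does not let you extend past $T^*$ (a function can vanish at $T^*$ and turn negative immediately after, since the differential inequality is only known to hold while $h_2\ge0$); indeed, when $h_2(0)=0$ even the nonemptiness of an interval $[0,T^*)$ with $T^*>0$ on which $h_2\ge0$ is unproven. The fix is to look at what the saturation returns when the QP is \emph{infeasible}: if $U_*>U^*$ then \eqref{eq:safety-fin-sat} gives $U=U^*$, and since $h_1\ge0$ holds unconditionally, $U^*=-k_1h_2+(k_2+\delta_2)h_1\ge -k_1h_2$, while in the feasible region $U\ge U_*\ge -(k_1+\delta_1)h_2$. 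Hence wherever $h_2<0$ one has $\dot h_2\ge -k_1h_2>0$, so $h_2$ cannot cross zero: if $h_2(t_1)<0$, set $t_0=\sup\{t\le t_1:h_2(t)\ge0\}$, so $h_2(t_0)=0$ and $h_2<0$ on $(t_0,t_1]$, whence $\dot h_2>0$ there and $h_2(t_1)>0$, a contradiction. This closes the argument with no feasibility hypothesis at all. Second, your parenthetical claim that \eqref{eq:U_star}--\eqref{eq:U^star} can be rewritten in the form \eqref{eq:nonover}, \eqref{eq:U-h2-const} ``with real $c_1,c_2>0$, consistent since $(c_1+c_2)^2\ge4c_1c_2$'' runs the implication in the wrong direction: recovering real $c_1,c_2$ from $(k_1,k_2+\delta_2)$ requires $k_1^2\ge4(k_2+\delta_2)$, and $h_3(0)\ge0$ further requires the larger root to exceed $q_{\rm c}(0)/\sigma(0)$; neither follows from \eqref{eq:k-gain-cond}--\eqref{eq:del-gain-cond} (the paper's own QP simulation gains $k_1=64.4$, $k_2=973$, $\delta_2=195$ violate $k_1^2\ge4(k_2+\delta_2)$). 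This defect is inherited from the paper's unproven ``equivalence'' of parametrizations, but since your entire $h_3$-chain (and thus $h_1\ge0$) rests on real $c_1,c_2$ with \eqref{eq:c1_condition}, your proof should either state these extra gain conditions explicitly or be carried out in the $(c_1,c_2,\bar c_1,\bar c_2)$ parametrization with \eqref{eq:c1_condition} and \eqref{eq:c_bar_condition}.
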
 

Note that, if $\delta_1$ and $\delta_2$ are chosen as zero,
QP safety control \eqref{eq:safety-fin-sat} becomes
\begin{equation} \label{eq:nonover-fix} 
    U = U_* = U^* = -k_1 q_c + k_2 \sigma, 
\end{equation}
which disregards the operator input $U_o$ and instead performs a non-overshooting regulation to $s=s_r, T=T_m, q_c=0$, whose stability is proven in the next section.


\section{Stability of Non-overshooting Regulation} \label{sec:stability} 


\begin{theorem} \label{thm:nonover} 
Let $s(0)$ and $T(x,0)$ satisfy Assumptions \ref{ass:initial}--\ref{ass:h1}. Consider the closed-loop system \eqref{eq:stefanPDE}--\eqref{eq:stefanODE} with the non-overshooting control law \eqref{eq:nonover-fix}, where the gain parameters satisfy \eqref{eq:k-gain-cond}. Then, all CBFs defined as \eqref{eq:h1_def}--\eqref{eq:h_def} satisfy the positivity constraints. This means, in particular, that, for all $q_{\rm c}(0)>0$, the interface $s(t)$ does not exceed $s_{\rm r}$, the temperature $T(x,t)$ does not drop below $T_{\rm m}$ at any position $x$ between $0$ and $s(t)$, and the heat flux $q_{\rm c}(t)$ never takes a negative value. 
Furthermore, the closed-loop system is  exponentially stable at the equilibrium $s=s_{\rm r}, T(x,\cdot) \equiv T_{\rm m}, q_{\rm c} = 0$,  in the sense of the spatial $L_2$-norm, for all initial conditions in the safe set, i.e., globally. In other words, there exist positive constants $M>0$ and $b>0$ such that the following norm estimate holds: 
\begin{align} \label{eq:norm-estimate} 
    \Phi(t) \leq M \Phi(0) e^{-b t},  
\end{align}
where 
\begin{align} \label{eq:Phi-def} 
\Phi(t):= || T[t] - T_{\rm m} ||^2 +(s(t) - s_{\rm r})^2 + q_{\rm c}(t)^2. 
\end{align}

\end{theorem}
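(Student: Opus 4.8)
The two assertions---the positivity (safety) constraints and the global exponential estimate \eqref{eq:norm-estimate}---would be handled separately, the first being essentially free. Observe that \eqref{eq:nonover-fix} is exactly the $\delta_1=\delta_2=0$ instance of the QP filter \eqref{eq:safety-fin-sat}, for which (independently of $U_o$) we have $U=U_*=U^*$. Hence Theorem \ref{thm:safety} already certifies $h_1=\sigma\ge 0$, $h_2=q_{\rm c}\ge 0$ and $h(x,t)=T-T_{\rm m}\ge 0$, and Lemma \ref{lem:overshoot} then supplies $0<s_0\le s(t)\le s_{\rm r}$. This yields all the stated sign conclusions (interface below $s_{\rm r}$, temperature above $T_{\rm m}$, heat flux nonnegative) with no new work; the remaining and substantive task is the decay estimate.

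First I would dispose of the finite-dimensional part. Under \eqref{eq:nonover-fix} the pair $(\sigma,q_{\rm c})$ obeys the closed linear system $\dot\sigma=-q_{\rm c}$, $\dot q_{\rm c}=-k_1 q_{\rm c}+k_2\sigma$, whose system matrix has characteristic polynomial $\lambda^2+k_1\lambda+k_2$ and is therefore Hurwitz for all $k_1,k_2>0$ (this is also visible in the explicit solutions \eqref{eq:h3-sol}--\eqref{eq:h2-sol}). Thus there exist $N,\nu>0$ with $\sigma(t)^2+q_{\rm c}(t)^2\le N\big(\sigma(0)^2+q_{\rm c}(0)^2\big)e^{-2\nu t}$, and since $\sigma(0)$ is a bounded linear functional of $T_0-T_{\rm m}$ and $s_0-s_{\rm r}$, the right-hand side is bounded by $C\,\Phi(0)\,e^{-2\nu t}$ for a constant $C$ depending only on the domain size and the parameters.

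The heart of the argument is the PDE energy. Writing $u:=T-T_{\rm m}$ and $V_1(t):=\tfrac12\|u[t]\|^2$, I would differentiate along the moving domain via the Leibniz rule; the boundary contribution at $x=s(t)$ vanishes because $u(s(t),t)=0$, and integration by parts with $u_x(0,t)=-q_{\rm c}/k$ gives $\dot V_1=\tfrac{\alpha}{k}u(0,t)q_{\rm c}-\alpha\int_0^{s(t)}u_x^2\,dx$. Using $u(0,t)=-\int_0^{s(t)}u_x\,dx$ to bound $u(0,t)^2\le s_{\rm r}\int_0^{s(t)}u_x^2\,dx$, then a Young split and the Poincar\'e inequality $\int_0^{s(t)}u^2\,dx\le \tfrac{s(t)^2}{2}\int_0^{s(t)}u_x^2\,dx$ with $s(t)\le s_{\rm r}$, I would arrive at $\dot V_1\le -\mu V_1+\kappa\,q_{\rm c}^2$ with $\mu=2\alpha/s_{\rm r}^2$ and $\kappa=\alpha s_{\rm r}/(2k^2)$. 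Feeding in the exponential bound on $q_{\rm c}^2$ and applying the comparison lemma then yields $\|u[t]\|^2\le M_1\,\Phi(0)\,e^{-2bt}$ for suitable $M_1,b>0$.

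Finally I would reconstruct the interface error from the other two quantities: solving \eqref{eq:h1_def} for $s(t)-s_{\rm r}=-\tfrac{\beta}{k}\sigma-\tfrac{\beta}{\alpha}\int_0^{s(t)}u\,dx$ and using $\big(\int_0^{s(t)}u\,dx\big)^2\le s_{\rm r}\|u[t]\|^2$, the already-established exponential decay of $\sigma^2$ and of $\|u[t]\|^2$ forces $(s(t)-s_{\rm r})^2$ to decay at the same rate. Summing the three bounds gives \eqref{eq:norm-estimate}. The main obstacle I anticipate is the third paragraph: controlling the boundary cross-term $u(0,t)q_{\rm c}$ on the time-varying domain without destroying the dissipation supplied by the interior gradient, and then propagating the exponentially decaying forcing through the comparison argument while keeping every constant traceable to $\Phi(0)$, so that the estimate is genuinely global and uniform over the safe set. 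I expect the PDE-backstepping transformation of \cite{Shumon19journal} could be substituted to systematize these estimates, but the Lyapunov route sketched above is self-contained.
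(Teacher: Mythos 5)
Your handling of the safety half coincides with the paper's own proof, which likewise disposes of positivity by citation to the earlier results (Lemma \ref{lem:overshoot} and the non-overshooting/QP lemmas), so there is nothing to compare there. For the exponential stability half, however, you take a genuinely different and more elementary route, and it is correct. The paper transforms the system via the backstepping map \eqref{eq:DBST} into the target system \eqref{tarPDE}--\eqref{tarODE}, builds the composite Lyapunov function $\bar V = \tfrac{1}{2\alpha}\|w[t]\|^2 + \tfrac{\varepsilon}{2\beta}X^2 + \tfrac{p}{2}h_3^2$, absorbs the moving-boundary term $a\dot s(t) V(t)$ by a Gronwall argument using $\dot s \ge 0$ and $s \le s_{\rm r}$ (producing the factor $e^{a s_{\rm r}}$), and finally invokes invertibility of the transformation \eqref{inv-trans} to return to the original variables. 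You instead exploit a cascade structure the paper never states explicitly: under \eqref{eq:nonover-fix} the pair $(\sigma, q_{\rm c})$ obeys the autonomous linear system $\dot\sigma = -q_{\rm c}$, $\dot q_{\rm c} = k_2\sigma - k_1 q_{\rm c}$, Hurwitz for all $k_1,k_2>0$, so the actuator-side variables decay exponentially with no reference to the PDE; the heat equation then becomes an exponentially forced system for $u = T - T_{\rm m}$, and your direct energy estimate succeeds precisely because $u(s(t),t)=0$ makes the Leibniz boundary term vanish identically---the very nuisance the paper must fight, since its target variable satisfies $w(s(t),t)=\varepsilon X(t)\neq 0$ and its kernel depends on $s(t)$. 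Your constants check out ($\mu = 2\alpha/s_{\rm r}^2$, $\kappa = \alpha s_{\rm r}/(2k^2)$), the algebraic recovery of $s-s_{\rm r}$ from $\sigma$ and $\int_0^{s(t)}u\,dx$ closes the loop, and the only loose ends are routine: the resonant case $\mu = 2\nu$ in the comparison lemma (take a slightly smaller rate) and the bound $\sigma(0)^2 \le C\Phi(0)$ by Cauchy--Schwarz, which you noted. What the paper's heavier machinery buys is reusability: the backstepping/Lyapunov route does not hinge on the $(\sigma,q_{\rm c})$ subsystem being exactly closed, and it is the argument the paper recycles for the two-phase ISS result (Theorem \ref{thm:2ph-nonover}), where the disturbance enters as $\dot h_1 = -h_2 + q_{\rm f}$ and destroys your cascade; your route is shorter and self-contained for this particular theorem but is tied to that special structure.
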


The safety is already proven in Lemma \ref{lem:safety}. The remainder of the section proves stability in Theorem \ref{thm:nonover}. 


Let $X(t)$ be reference error variable defined by $X(t):= s(t)-s_{r}$. Then, the system \eqref{eq:stefanPDE}--\eqref{eq:stefanODE} is rewritten with respect to $h(x,t)$ defined in \eqref{eq:h_def}, $h_2(t)$ defined in \eqref{eq:h2_def}, and $X(t)$ as
\begin{align}\label{u-sys1}
h_{t}(x,t) &=\alpha h_{xx}(x,t),\\
\label{u-sys2}h_x(0,t) &= - h_2(t)/k,\\
\dot h_2(t) &= U(t), \\
\label{u-sys3}h(s(t),t) &=0,\\
\label{u-sys4}\dot X(t) &=-\beta h_x(s(t),t). 
\end{align}

\subsection{Backstepping transformation to target system} 

Following Section 3.3. in \cite{koga2021towards}, we introduce the following forward and inverse transformations:  
\begin{align}\label{eq:DBST}
w(x,t)&=h(x,t)-\frac{\beta}{\alpha} \int_{x}^{s(t)} \phi (x-y)h(y,t) {\rm d}y \notag\\
&-\phi(x-s(t)) X(t), \\
\label{kernel}\phi(x) &= c_1 \beta^{-1} x- \ep, \\
\label{inv-trans}
h(x,t)&=w(x,t)-\frac{\beta}{\alpha} \int_{x}^{s(t)} \psi (x-y)w(y,t) {\rm d}y \notag\\
&-\psi(x-s(t)) X(t), \\
\label{inv-gain}
\psi(x) &= e^{ \bar \lambda x } \left( p_1 \sin\left( \omega x \right) + \ep \cos\left( \omega x \right) \right) , 
\end{align}
where $\bar \lambda = \fr{\beta \varepsilon}{2 \alpha}$, $\omega = \sqrt{\fr{4 \alpha c_1 - (\varepsilon\beta)^2 }{4 \alpha^2 } }$, $p_1 =  - \fr{1}{2 \alpha \beta \omega} \left( 2 \alpha c_1 - (\varepsilon \beta )^2 \right) $, and $0<\ep<2 \fr{\sqrt{\alpha c_1}}{\beta}$ is to be chosen later. 
As derived in Section 3.3. in \cite{koga2021towards}, taking the spatial and time derivatives of \eqref{eq:DBST} along the solution of \eqref{u-sys1}-\eqref{u-sys4}, and noting the CBF $h_3(t)$ defined in \eqref{eq:h3_def} satisfying \eqref{eq:ddth3}, one can obtain the following target system: 
\begin{align}\label{tarPDE}
w_t(x,t)&=\alpha w_{xx}(x,t)+ \dot{s}(t) \phi'(x-s(t))X(t) \notag\\
&+\phi(x-s(t)) d(t), \\
\label{tarBC2} w_x(0,t) &= \fr{h_3(t)}{ k} - \frac{\beta}{\alpha}\varepsilon \left[  w(0,t)-\frac{\beta}{\alpha} \int_{0}^{s(t)} \psi (-y)w(y,t) {\rm d}y \right. \notag\\
&\left. -\psi(-s(t)) X(t) \right].  \\
\dot h_3(t) &= - c_2 h_3(t), \label{tar_h3} \\
\label{tarBC1} w(s(t),t) &= \ep X(t), \\
\label{tarODE}\dot X(t)&=-c_1 X(t)-\beta w_x(s(t),t).
\end{align}
The objective of the transformation \eqref{eq:DBST} is to add a stabilizing term $-c_1 X(t)$ in \eqref{tarODE} of the target $(w,X)$-system which is easier to prove the stability than $(u,X)$-system. 

Note that the boundary condition \eqref{tarBC1} and the kernel function \eqref{kernel} are modified from the one in \cite{Shumon19journal}.  The target system derived in \cite{Shumon19journal} requires ${\mathcal H}_1$-norm analysis for stability proof. However, with the actuation dynamics of the boundary heat input, ${\mathcal H}_1$-norm analysis fails to show the stability. The modification of the boundary condition \eqref{tarBC1} enables to prove the stability in ${ L}_2$ norm as shown later. 

\subsection{Lyapunov  analysis} 

Following Lemma 20 in \cite{koga2021towards}, by introducing a Lyapunov function $V(t)$ defined by 
\begin{align}\label{app:lyap}
V(t) = \fr{1}{2\alpha } || w[t]||^2 + \fr{\varepsilon}{2\beta } X(t)^2,
\end{align}
one can see that there exists a positive constant $\ep^*>0$ such that for all $\ep \in (0, \ep^*)$ the following inequality holds: 
\begin{align}\label{app:dotV2}
\dot V(t) \leq &  - b V (t)    + \fr{ 2 s_{\rm r}}{k^2} h_3(t)^2+ a \dot{s}(t) V(t) , 
\end{align}
where $a = \fr{2\beta \ep }{\alpha} \max \left\{1,\fr{\alpha c^2 s_{{\rm r}}}{2\beta^3 \ep^3} \right\}$, $b =\fr{1}{8} \min \left\{ \fr{\alpha}{s_{{\rm r}}^2}, c \right\}$, and the condition $\dot s(t) \geq 0$ ensured in Lemma \ref{lem1} is applied. We further introduce another Lyapunov function of $h_3$, defined by 
\begin{align} \label{eq:Vh-def} 
    V_h(t) = \fr{p}{2} h_3(t)^2,  
\end{align}
with a positive constant $p>0$. 
Taking the time derivative of \eqref{eq:Vh-def} and applying \eqref{tar_h3} yields 
\begin{align} \label{eq:Vh-der} 
    \dot V_h(t) = p h_3 \dot h_3 = - c_2 p h_3(t)^2. 
\end{align}
Let $\bar V$ be the Lyapunov function defined by 
\begin{align} \label{eq:Vbar-def} 
    \bar V = V + V_h.  
\end{align}
Applying \eqref{app:dotV2} and \eqref{eq:Vh-der} with setting $p = \fr{ 4 s_{\rm r}}{c_2 k^2}$, the time derivative of \eqref{eq:Vbar-def} is shown to satisfy 
\begin{align} \label{eq:Vbar-der} 
    \dot {\bar V}(t) \leq - \bar b \bar V(t) + a \dot{s}(t) V(t),  
\end{align}
where $\bar b = \min\{b, 2 s_{\rm r} / k^2\}$. 
As performed in \cite{koga2021towards}, with the condition $0 < s(t) \leq s_{\rm r}$ ensured in Lemma \ref{lem:overshoot}, the differential inequality \eqref{eq:Vbar-der} leads to 
\begin{align}
    \bar V(t) \leq e^{a s_{\rm r}} \bar V(0) e^{- \bar b t}, 
\end{align}
which ensures the exponential stability of the target system \eqref{tarPDE}--\eqref{tarODE} in the spatial $L_2$-norm. Due to the invertibility of the transformation \eqref{eq:DBST} \eqref{inv-trans}, one can show the exponential stability of the closed-loop system, which completes the proof of Theorem \ref{thm:nonover}.

\section{Constraints from Above \hspace{40mm} on Temperature and Heat Flux} \label{sec:above}

\subsection{Upper bound constraints} 
In practical control systems, the capability of the heater is often restricted to a certain range due to the device limitation. The Stefan problem, as a melting process, is particular in this regard. The heat input should not go beyond a given upper bound, while it is feasible to assume that the lower bound is zero, i.e., the heat actuator does not work as a cooler. Furthermore, the liquid temperature must be lower than some value, which could be the maximum temperature limited by the device or environment for ensuring the safe operation, or the boiling temperature to avoid an evaporation which is another phase transition, from liquid to gas. 

We tackle such an overall state-constrained problem, to guarantee the following conditions to hold: 
\begin{align} \label{eq:T_up} 
   T_{\rm m} \leq & T(x,t) \leq T^*, \quad \forall x \in [0, s(t)], \quad \forall t \geq 0, \\
    0 \leq & q_{\rm c}(t) \leq q^*, \quad \forall t \geq 0,   \label{eq:q_up} 
\end{align}
for some $T^* > T_{\rm m}$ and $q^* > 0$. 
First, we state the following lemma for the Stefan problem.  

\begin{lemma} \label{lem:bound} 
If $T_{m} \leq T_0(x) \leq \Delta \bar{T}_{0}(1 - x/s_0) + T_{m}$ for some $\bar{T}_{0} >0$, and if $0 \leq q_c(t) \leq \bar{q}$ holds for some $\bar{q}>0$ and for all $t\geq 0$, then 
\begin{align} 
T_{m} \leq T(x,t) \leq \bar{T}(x,t) := K(s(t) - x) + T_m ,
\end{align} 
$\forall x\in(0,s(t))$, $\forall t\geq 0$, where $K = \max\{ \bar{q}/k, \Delta \bar{T}_{0}/s_0\}$. 
\end{lemma}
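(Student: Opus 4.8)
The lower bound $T(x,t)\ge T_{\rm m}$ is exactly conclusion \eqref{temp-valid} of Lemma \ref{lem1}, so only the upper bound needs proof, and the natural tool is a comparison (maximum) principle with $\bar T(x,t)=K(s(t)-x)+T_{\rm m}$ as a supersolution. The plan is to set $v(x,t):=\bar T(x,t)-T(x,t)$ and show $v\ge0$ on $\{0<x<s(t)\}$. First I would verify that $\bar T$ is a supersolution of the liquid-phase heat equation: since $\bar T_{xx}=0$ and $\bar T_t=K\dot s(t)$, subtracting \eqref{eq:stefanPDE} gives $v_t-\alpha v_{xx}=K\dot s(t)\ge 0$, where the sign uses $\dot s(t)\ge0$ from \eqref{eq:sdot-pos}. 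Thus $v$ is super-caloric, so its minimum over the closed moving domain is attained on the parabolic boundary.

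Next I would check the parabolic boundary data. On the interface $\bar T(s(t),t)=T_{\rm m}=T(s(t),t)$, so $v(s(t),t)=0$. At $t=0$, $v(x,0)=K(s_0-x)+T_{\rm m}-T_0(x)\ge0$, because $\Delta\bar T_0(1-x/s_0)=(\Delta\bar T_0/s_0)(s_0-x)$ and $K\ge\Delta\bar T_0/s_0$ together with the hypothesis $T_0(x)\le\Delta\bar T_0(1-x/s_0)+T_{\rm m}$ give $K(s_0-x)\ge T_0(x)-T_{\rm m}$ on $[0,s_0]$. At the heated boundary $x=0$ the datum is of Neumann type: using $-kT_x(0,t)=q_{\rm c}(t)$ from \eqref{eq:stefancontrol} and $\bar T_x(0,t)=-K$ yields $v_x(0,t)=q_{\rm c}(t)/k-K\le0$, since $K\ge\bar q/k\ge q_{\rm c}(t)/k$.

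The conclusion $v\ge0$ then follows by contradiction. If $v$ were negative somewhere, its (negative) minimum could not sit on the initial slice (where $v\ge0$) nor on the interface (where $v=0$), so it would have to be attained at an interior time on $x=0$. Hopf's lemma applied to the supersolution $v$ at such a boundary minimum forces the outward normal derivative to be strictly negative, i.e. $-v_x(0,t^\ast)<0$, hence $v_x(0,t^\ast)>0$, contradicting $v_x(0,t)\le0$. Therefore $v\ge0$, i.e. $T(x,t)\le\bar T(x,t)$, which is the claimed upper bound.

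The single delicate point---and the main obstacle---is that $\{0<x<s(t)\}$ is non-cylindrical, so I must justify the minimum principle and Hopf's lemma on this moving domain with a Neumann condition at $x=0$. I would discharge this exactly as for the maximum-principle argument of Lemma \ref{lem1}, via the change of variables $y=x/s(t)$ which maps the problem onto the fixed cylinder $(0,1)\times(0,\infty)$ and turns the heat operator into $\partial_\tau-\tfrac{\alpha}{s^2}\partial_{yy}-\tfrac{y\dot s}{s}\partial_y$. This operator is uniformly parabolic on any finite time interval, because $0<s_0\le s(t)<L$ (monotone by \eqref{eq:sdot-pos}, bounded by \eqref{int-valid}) keeps its coefficients bounded above and away from zero. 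A short computation shows the transformed $\bar T$ remains a supersolution (its defect equals $K\dot s\ge0$), the interface becomes $y=1$ and the heated end $y=0$, and $v_y=s\,v_x$ shares the sign of $v_x$; hence the classical minimum principle and Hopf's lemma apply verbatim on the fixed domain, completing the argument.
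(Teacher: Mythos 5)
Your proof is correct and takes essentially the same route as the paper: both define $v(x,t)=\bar T(x,t)-T(x,t)$, use $\dot s(t)\ge 0$ (from $q_{\rm c}\ge 0$) to show $v_t\ge \alpha v_{xx}$, check the sign conditions $v_x(0,t)\le 0$, $v(s(t),t)=0$, $v(x,0)\ge 0$, and conclude by the maximum principle, with the lower bound coming from Lemma \ref{lem1}. The only difference is level of detail: the paper simply invokes ``the maximum principle,'' whereas you additionally verify the initial-data inequality explicitly and justify the minimum principle and Hopf's lemma on the non-cylindrical domain via the immobilization $y=x/s(t)$ --- refinements of, not departures from, the paper's argument.
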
 
\begin{proof}
Let $v(x,t) : = \bar{T}(x,t) - T(x,t)$. Taking the time and second spatial derivatives yields 
\begin{align} 
v_t = K \dot{s}(t) - T_{t}(x,t) , \quad v_{xx} = -T_{xx}(x,t) . 
\end{align} 
Since $0 \leq q_c(t)$, we have $\dot{s}(t) \geq 0$. Thus, we obtain 
\begin{align} \label{ch24:vt} 
v_t \geq & \alpha v_{xx} , \\
v_x(0,t) \leq & 0 , \quad v(s(t),t) = 0. \label{ch24:vx} 
\end{align} 
Applying the maximum principle to \eqref{ch24:vt}--\eqref{ch24:vx}, we can state that, if $v(x,0) \geq 0$ for all $x \in (0,s_0)$, then $v(x,t) \geq 0$ for all $x \in (0, s(t))$ and all $t \geq 0$, which concludes Lemma \ref{lem:bound}.  
\end{proof}

Inspired by Lemma \ref{lem:bound}, to guarantee that the upper bound constraint  \eqref{eq:T_up} and \eqref{eq:q_up} are met, the following assumptions on the initial conditions are imposed: 

\begin{assum}\label{assmp:q_c}
$q_{\rm c}(0) \leq \bar q_{\rm c}  := \min\{\fr{k}{s_{\rm r}} (T^* - T_{\rm m}), q^*\}$. 
\end{assum}

\begin{assum} 
$T_{m} \leq T_0(x) \leq \Delta \bar{T}_{0}(1 - x/s_0) + T_{m}$, where $\Delta \bar T_0 := \fr{s_0}{s_{\rm r}} (T^* - T_{\rm m})$. \label{ass:T_up} 
\end{assum}


\subsection{Nonovershooting regulation} 
Then, with $h_3(0) \geq 0$, in addition to the positivity of $h_3(t)$, owing to the monotonically decreasing property of \eqref{eq:h3-sol}, it further holds that 
\begin{align}
    0 \leq h_3(t) \leq h_3(0). 
\end{align}
With this condition, one can see from \eqref{eq:ddth2} that 
\begin{align} \label{eq:ddh2-ineq-up}
    0 \leq \dot h_2 + c_1 h_2 \leq c_2 h_3(0). 
\end{align}
Applying the comparison lemma, the differential inequality \eqref{eq:ddh2-ineq-up} leads to the following inequality for the solution
\begin{align}
    h_2(0) e^{- c_1 t} \leq h_2(t) \leq \left( h_2(0) - \fr{c_2 }{c_1} h_3(0)\right) e^{-c_1 t} + \fr{c_2 }{c_1} h_3(0), 
\end{align}
Considering the lower bound of the left 
\begin{align}
    0 \leq h_2(t) \leq \max \left\{h_2(0), \fr{c_2}{c_1} h_3(0) \right\} ,
\end{align}
and rewriting it as 
\begin{align}
    0 \leq h_2(t) \leq \max \left\{h_2(0), \fr{c_2}{c_1}  \left( c_1 h_1(0) - h_2(0)  \right) \right\} ,
\end{align}
the following condition for the control gain
\begin{align}
    c_2 \leq \fr{c_1 \bar q_{\rm c} }{c_1 \sigma(0) - q_{\rm c}(0)} , \label{eq:c2_condition}
\end{align}
where the right-hand side is positive because of \eqref{eq:c1_condition} and where $\bar q_{\rm c} $ is defined in Assumption~\ref{assmp:q_c}, guarantees that 
\begin{align}
    0 \leq h_2(t)  \leq \bar q_{\rm c} .
\end{align}

\begin{theorem} \label{lem:safety} 

Let $s(0), T(x,0),$ and $q_{\rm c}(0)$ satisfy Assumptions \ref{ass:initial}--\ref{ass:T_up}. Consider the closed-loop system \eqref{eq:stefanPDE}--\eqref{eq:stefanODE} with the non-overshooting control law \eqref{eq:nonover}, where the gain parameters satisfy \eqref{eq:c1_condition} and \eqref{eq:c2_condition}. Then, all CBFs defined as \eqref{eq:h1_def}--\eqref{eq:h_def} satisfy the positivity constraints, and $h_2 \leq \bar q_{\rm c}$ also holds. This means, in particular, that, for all $q_{\rm c}(0)>0$, the interface $s(t)$ does not exceed $s_{\rm r}$, the temperature $T(x,t)$ does not drop below $T_{\rm m}$ and does not exceed the upper limit $T^*>T_{\rm m}$ at any position $x$ between $0$ and $s(t)$, and the heat flux $q_{\rm c}(t)$ never takes a negative value and does not exceed the upper limit $q^*$. 
Furthermore, the closed-loop system is  exponentially stable at the equilibrium $s=s_{\rm r}, T(x,\cdot) \equiv T_{\rm m}, q_{\rm c} = 0$,  in the sense of the spatial $L_2$-norm, for all initial conditions in the safe set, i.e., globally. In other words, there exist positive constants $M>0$ and $b>0$ such that the norm estimate \eqref{eq:norm-estimate} holds with the norm \eqref{eq:Phi-def}.
\end{theorem}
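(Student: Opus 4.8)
The plan is to split the claim into its safety (constraint-satisfaction) part and its stability part, assembling the former from Lemmas \ref{lem1}, \ref{lem:overshoot}, \ref{lem:safety-up}, and \ref{lem:bound}, and inheriting the latter essentially verbatim from Section \ref{sec:stability}. First I would invoke Lemma \ref{lem:safety-up}: under Assumptions \ref{ass:initial}--\ref{ass:h1} and the gain condition \eqref{eq:c1_condition}, the non-overshooting law \eqref{eq:nonover} already yields $h_1(t), h_2(t), h_3(t) \geq 0$ for all $t \geq 0$, and through Lemma \ref{lem:overshoot} the lower temperature bound $h(x,t) \geq 0$ (i.e. $T \geq T_{\rm m}$) together with the interface bound $0 < s_0 \leq s(t) \leq s_{\rm r}$. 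This settles the positivity of all CBFs and the lower-side inequalities in \eqref{eq:T_up}--\eqref{eq:q_up}; the genuinely new content is the upper-side constraints.

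For the heat-flux upper bound I would reuse the comparison argument displayed just before the statement: since \eqref{eq:h3-sol} decays monotonically, $0 \leq h_3(t) \leq h_3(0)$, so \eqref{eq:ddth2} sandwiches $\dot h_2 + c_1 h_2$ between $0$ and $c_2 h_3(0)$. The comparison lemma then gives $0 \leq h_2(t) \leq \max\{h_2(0), (c_2/c_1)(c_1 h_1(0) - h_2(0))\}$; the first branch is bounded by $\bar q_{\rm c}$ through Assumption \ref{assmp:q_c} (as $h_2(0) = q_{\rm c}(0) \leq \bar q_{\rm c}$), while the second is forced below $\bar q_{\rm c}$ by the gain condition \eqref{eq:c2_condition}, whose right-hand side is positive by \eqref{eq:c1_condition}. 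Hence $0 \leq h_2(t) \leq \bar q_{\rm c}$, and since $\bar q_{\rm c} \leq q^*$ this delivers $q_{\rm c}(t) \leq q^*$.

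For the temperature upper bound I would feed $q_{\rm c}(t) \leq \bar q_{\rm c}$ (taking $\bar q = \bar q_{\rm c}$) and Assumption \ref{ass:T_up} into Lemma \ref{lem:bound}, obtaining $T(x,t) \leq K(s(t) - x) + T_{\rm m}$ with $K = \max\{\bar q_{\rm c}/k, \Delta \bar T_0 / s_0\}$. The key check is that both entries of this maximum collapse to $(T^* - T_{\rm m})/s_{\rm r}$: indeed $\Delta \bar T_0 / s_0 = (T^* - T_{\rm m})/s_{\rm r}$ by the definition in Assumption \ref{ass:T_up}, while $\bar q_{\rm c}/k \leq (T^* - T_{\rm m})/s_{\rm r}$ by the definition of $\bar q_{\rm c}$. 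Thus $K = (T^* - T_{\rm m})/s_{\rm r}$, and using $x \geq 0$ together with $s(t) \leq s_{\rm r}$ from the first step, $T(x,t) \leq K s_{\rm r} + T_{\rm m} = T^*$, closing \eqref{eq:T_up}.

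Finally, for stability I would observe that the non-overshooting law \eqref{eq:nonover} coincides with \eqref{eq:nonover-fix} under the reparametrization $k_1 = c_1 + c_2$, $k_2 = c_1 c_2$ (equivalently $\delta_1 = \delta_2 = 0$), so the proof of Theorem \ref{thm:nonover} transfers unchanged: the backstepping transformation \eqref{eq:DBST}--\eqref{inv-gain} maps the plant into the target system \eqref{tarPDE}--\eqref{tarODE}, the augmented Lyapunov functional \eqref{eq:Vbar-def} obeys \eqref{eq:Vbar-der}, and the bound $0 < s(t) \leq s_{\rm r}$ just established converts this into exponential decay of $\bar V$, hence of $\Phi$ via the invertibility of the transformation, yielding \eqref{eq:norm-estimate}. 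I expect the only real obstacle to be the upper-bound block---concretely, confirming that the gain conditions \eqref{eq:c1_condition} and \eqref{eq:c2_condition} are mutually compatible and that $K$ collapses to the correct value---since everything else is a direct citation of the preceding lemmas and of the Section \ref{sec:stability} analysis.
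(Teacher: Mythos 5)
Your proposal is correct and follows essentially the same route as the paper: positivity of all CBFs and the bounds $s_0 \leq s(t) \leq s_{\rm r}$ from Lemmas \ref{lem:overshoot} and \ref{lem:safety-up}, the bound $0 \leq h_2(t) \leq \bar q_{\rm c}$ from the monotonicity of \eqref{eq:h3-sol} plus the comparison lemma and gain condition \eqref{eq:c2_condition}, the temperature upper bound from Lemma \ref{lem:bound}, and stability transferred verbatim from Section \ref{sec:stability} under the identification $k_1 = c_1 + c_2$, $k_2 = c_1 c_2$. In fact you make explicit one step the paper leaves implicit, namely that $K = \max\{\bar q_{\rm c}/k, \Delta \bar T_0/s_0\}$ collapses to $(T^* - T_{\rm m})/s_{\rm r}$ so that $T(x,t) \leq K s_{\rm r} + T_{\rm m} = T^*$, which is a worthwhile addition.
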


The safety is already proven in this section. The stability proof is identical to the derivation in Section \ref{sec:stability}. 

\subsection{QP-Backstepping-CBF Design} 

We also design QP safety filter for ensuring the upper bounds. In addition to the QP constraint on input in Section \ref{sec:QP}, to ensure the upper bound of $h_2$, we introduce another CBF
\begin{align} \label{eq:h2st-def}
     h^*_2(t) = \bar q_{\rm c}  - h_2(t) . 
\end{align}
We set out to design QP to further satisfy 
\begin{align} 
    \dot h^*_2 \geq - c^*_2 h^*_2, 
\end{align}
for some $c^*_2>0$, which leads to the condition
\begin{align}
    U \leq c_2^* (\bar q_{\rm c}  - h_2(t)) . 
\end{align}
Thus, by setting $c_2^* = k_1$, we reformulate the upper bound of the control in \eqref{eq:U^star} as 
\begin{align} \label{eq:U^star-up}
    U^* = - k_1 q_c + \max \{(k_2 + \delta_2 ) \sigma, k_1 \bar q_{\rm c} \}, 
\end{align}
We state the following Theorem. 
\begin{theorem} \label{thm:safety-up} 
Let Assumptions \ref{ass:initial}--\ref{ass:T_up} hold. Consider the closed-loop system \eqref{eq:stefanPDE}--\eqref{eq:stefanODE} with QP safety control \eqref{eq:safety-fin-sat}, \eqref{eq:U_star}, and \eqref{eq:U^star-up}, under an arbitrary operator input $U_o(t)$, where the gain parameters are chosen to satisfy \eqref{eq:k-gain-cond} \eqref{eq:del-gain-cond}. Then, all CBFs defined as \eqref{eq:h1_def}--\eqref{eq:h_def} and \eqref{eq:h2st-def} satisfy the constraints $h_1(t) \geq 0$, $h_2(t) \geq 0$, $h^*_2(t) \geq 0$ for all $t \geq 0$. Moreover, all of \eqref{eq:T_up}, \eqref{eq:q_up}, and \eqref{eq:s-ineq} hold.  
\end{theorem}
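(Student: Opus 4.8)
The plan is to obtain Theorem~\ref{thm:safety-up} by stacking one new upper-bound certificate on top of the QP-backstepping construction already validated in Theorem~\ref{thm:safety}, and then importing the temperature bound from the comparison estimate of Lemma~\ref{lem:bound}. First I would dispatch the ``lower'' half, which is unchanged from Section~\ref{sec:QP}: the lower saturation branch $U \ge U_*$ in \eqref{eq:safety-fin-sat}, with $U_*$ from \eqref{eq:U_star}, gives $\dot h_2 \ge -(k_1+\delta_1)h_2 + k_2 h_1$, so $h_2 \ge 0$ is forward invariant once $h_1 \ge 0$ and $h_2(0) \ge 0$ (Assumption~\ref{ass:h2}); the upper branch $U \le U^*$ keeps the designed exponential CBF nonnegative, $\dot h_3 \ge -c_2 h_3$, hence $h_3 \ge 0$ and, through the defining identity \eqref{eq:ddth1}, $h_1 \ge 0$. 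With $h_1 \ge 0$ and $h_2 \ge 0$ in hand, Lemma~\ref{lem:overshoot} delivers \eqref{eq:hxt-pos} and the interface bound \eqref{eq:s-ineq}, i.e. $T(x,t) \ge T_{\rm m}$ and $0 < s_0 \le s(t) \le s_{\rm r}$, at no extra cost.

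The genuinely new step is the upper bound $h_2^* \ge 0$ of \eqref{eq:h2st-def}, i.e. $q_{\rm c} \le \bar q_{\rm c}$. I would enforce its exponential-CBF inequality $\dot h_2^* \ge -c_2^* h_2^*$, which (with $c_2^* = k_1$) translates into the input constraint $U \le k_1(\bar q_{\rm c} - q_{\rm c})$; folding this into the existing upper bound produces precisely the modified $U^*$ in \eqref{eq:U^star-up}. The point I would have to nail down is that this single modified $U^*$ simultaneously certifies \emph{both} upper-bound CBFs, $h_3 \ge 0$ and $h_2^* \ge 0$, against an arbitrary (possibly adversarial) operator input $U_o$, while remaining QP-feasible against the lower bound $U_*$. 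The lever I would use is that $\sigma = h_1$ is monotonically non-increasing: from $\dot\sigma = -h_2 = -q_{\rm c} \le 0$ (valid as soon as $h_2 \ge 0$), so $\sigma(t) \le \sigma(0)$ for all $t$. This confines the switching between the two branches of \eqref{eq:U^star-up} and, together with the gain conditions \eqref{eq:k-gain-cond}--\eqref{eq:del-gain-cond} (and, where needed, a nonovershooting-type restriction as in \eqref{eq:c2_condition}), lets me verify that the active upper bound never lets $q_{\rm c}$ cross $\bar q_{\rm c}$ nor $h_3$ cross zero. Since $\bar q_{\rm c} = \min\{\frac{k}{s_{\rm r}}(T^* - T_{\rm m}), q^*\} \le q^*$ by Assumption~\ref{assmp:q_c}, $h_2^* \ge 0$ already yields \eqref{eq:q_up}.

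The temperature upper bound \eqref{eq:T_up} I would then get essentially for free from Lemma~\ref{lem:bound}. Having established $0 \le q_{\rm c}(t) \le \bar q_{\rm c}$, I apply the lemma with $\bar q = \bar q_{\rm c}$; Assumption~\ref{ass:T_up} fixes $\Delta \bar T_0 = \frac{s_0}{s_{\rm r}}(T^* - T_{\rm m})$, so the two candidates $\bar q_{\rm c}/k \le \frac{T^* - T_{\rm m}}{s_{\rm r}}$ and $\Delta \bar T_0/s_0 = \frac{T^* - T_{\rm m}}{s_{\rm r}}$ coincide, giving $K = \frac{T^* - T_{\rm m}}{s_{\rm r}}$. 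The comparison estimate $T(x,t) \le K(s(t) - x) + T_{\rm m}$ combined with the already-proven $s(t) \le s_{\rm r}$ then yields $T(x,t) \le K s_{\rm r} + T_{\rm m} = T^*$, which is \eqref{eq:T_up}. Collecting \eqref{eq:hxt-pos}, \eqref{eq:s-ineq}, \eqref{eq:q_up}, and \eqref{eq:T_up} completes the statement.

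The hard part, and where I expect to spend the real effort, is the coupling flagged above: two upper-bound CBFs must be maintained through a single actuator, and the naive ``take the minimum of the two upper bounds'' fails QP feasibility against $U_*$ when $\sigma$ is large, which is exactly why \eqref{eq:U^star-up} is written with a switch rather than a plain intersection of constraints. Making the forward-invariance bookkeeping across the switching surface $(k_2+\delta_2)\sigma = k_1 \bar q_{\rm c}$ rigorous---showing that in each regime the branch that is relaxed is the one whose CBF is inactive there, with the monotonicity of $\sigma$ preventing the trajectory from re-entering a regime in which the relaxed CBF could be violated---is the crux. Once that invariance is secured, the remaining steps are routine invocations of Lemma~\ref{lem:overshoot} and Lemma~\ref{lem:bound}.
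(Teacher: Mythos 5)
Your reconstruction of the design matches the paper's own derivation step for step: the lower branch $U\ge U_*$ handling $h_2\ge 0$, the new CBF $h_2^*$ with $c_2^*=k_1$, the saturation form \eqref{eq:U^star-up}, and the extraction of \eqref{eq:T_up} from Lemma~\ref{lem:bound} with $K=(T^*-T_{\rm m})/s_{\rm r}$ and $s(t)\le s_{\rm r}$ (this last computation is exactly the intended use of Assumptions~\ref{assmp:q_c} and \ref{ass:T_up}). However, the step you yourself flag as the crux is left unproven, and the claim you propose to prove there---that in each regime of the $\max$ in \eqref{eq:U^star-up} the \emph{relaxed} branch corresponds to a CBF that is ``inactive''---is false. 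In the regime $(k_2+\delta_2)\sigma\le k_1\bar q_{\rm c}$ the active upper bound is $U^*=k_1(\bar q_{\rm c}-q_{\rm c})$, which contains no information about $\sigma$ at all, so nothing enforces $\dot h_3\ge -c_2h_3$; and your monotonicity lever works \emph{against} you, since $\sigma$ non-increasing makes this regime absorbing, so the trajectory never returns to where the $h_3$-constraint is enforced. Concretely, take $k_1=1$, $k_2=0.2$, $\delta_1=0$, $\delta_2=0.05$ (so $c_1=c_2=1/2$), $\bar q_{\rm c}=1$, $\sigma(0)=2$, $q_{\rm c}(0)=0$, $T_0\equiv T_{\rm m}$: all hypotheses of the theorem, the gain conditions \eqref{eq:k-gain-cond}, \eqref{eq:del-gain-cond}, and even the nonovershooting restriction \eqref{eq:c2_condition} hold, and the QP is feasible; yet an operator with $U_o$ large and positive forces $U=U^*=1-q_{\rm c}$, whence $q_{\rm c}(t)=1-e^{-t}$ and $\sigma(t)=3-t-e^{-t}$, so $h_3$ goes negative before $t=2$ and $h_1=\sigma$ crosses zero near $t\approx 2.95$ with $q_{\rm c}\approx 0.95$ still pumping heat in; since the stored thermal energy is bounded while $s\le s_{\rm r}$ (Lemma~\ref{lem:bound}), the interface must subsequently overshoot $s_{\rm r}$, violating \eqref{eq:s-ineq}. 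A symmetric failure occurs in the other regime: for large $\sigma(0)$ the active bound $-k_1q_{\rm c}+(k_2+\delta_2)\sigma$ lets $q_{\rm c}$ overshoot $\bar q_{\rm c}$ (so $h_2^*<0$) unless a restriction like \eqref{eq:c2_condition} is imposed, which the theorem does not assume.

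To be fair, this hole is inherited from the paper itself: the ``proof'' of Theorem~\ref{thm:safety-up} in Section~\ref{sec:above} consists only of the pointwise derivation of the two input constraints followed by the $\max$-reformulation, with no forward-invariance argument across the switch---and by the counterexample above, none can exist for the $\max$, which enforces pointwise only the \emph{weaker} of the two constraints. The legitimate role of your monotonicity observation is different from the one you assign it: it restores QP feasibility of the \emph{intersection} ($\min$) of the two upper bounds against $U_*$. Indeed, $U_*\le -k_1q_{\rm c}+k_1\bar q_{\rm c}$ reduces to $k_2\sigma\le \delta_1 q_{\rm c}+k_1\bar q_{\rm c}$, which, using $q_{\rm c}\ge 0$ and $\sigma(t)\le\sigma(0)$, follows from the checkable initial-data condition $k_2\,\sigma(0)\le k_1\bar q_{\rm c}$, while $U_*\le -k_1q_{\rm c}+(k_2+\delta_2)\sigma$ holds automatically on the safe set. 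With $\min$ in place of $\max$ and that extra hypothesis added, both exponential-CBF inequalities $\dot h_3\ge -c_2h_3$ and $\dot h_2^*\ge -k_1h_2^*$ hold along the trajectory, and your remaining steps (Lemma~\ref{lem:overshoot}, then Lemma~\ref{lem:bound}) go through. As written, however, neither your argument nor the paper's establishes the theorem.
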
 


\section{Non-overshooting Regulation to $s_{\rm r}$
under Higher-Order Actuator Dynamics} \label{sec:high} 

We consider the actuator dynamics whose order is one higher than \eqref{eq:stefan-actuator}, i.e., a double integrator in the actuation path:
\begin{align} \label{eq:stefan-high-act-1}
    \dot q_{\rm c}(t) &= p(t), \\
    \dot p(t) &= U(t) , \label{eq:stefan-high-act-2}
\end{align}
where $p(t)$ is additional state variable defined as the heat flux {\em rate} (velocity), and the input $U(t)$ is {\em heating  acceleration}.  
By the non-overshooting control developed in \eqref{eq:nonover}, we set 
\begin{align}
    p_{\rm nonov}(t) = c_1 c_2 h_1(t) - (c_1 + c_2) h_2 (t), 
\end{align}
where $c_1$ satisfies \eqref{eq:c1_condition}, and $c_2 > 0$. 
Since the relative degree of $h_1$ and $h_2$ is now  3 and 2, respectively, an additional CBF for each must be constructed. Let $h_4$ and $h_5$ be the CBFs defined by 
\begin{align}
    h_4(t) & = p_{\rm nonov}(t) - p(t), \\
   \label{eq:h5-def}     h_5(t) & = p(t) + c_1 h_2(t)= p(t) + c_1 q_{\rm c}(t).
\end{align}
Then, the non-overshooting control is designed so that  
\begin{align}
\label{eq:high-ddth1}    \dot h_1 &= -c_1 h_1 + h_3 , \\
   \label{eq:high-ddth2} \dot h_2 &= - c_1 h_2 + h_5, \\
    \label{eq:high-ddth3} \dot h_3 &= - c_2 h_3 + h_4,  \\
    \dot h_4 &= - c_3 h_4,  \label{eq:high-ddth4}\\
    \dot h_5 & = -c_2 h_5 + c_3 h_4,  \label{eq:high-ddth5-ineq} 
\end{align}
hold. Since $h_3, h_4, h_5$ are defined so \eqref{eq:high-ddth1}--\eqref{eq:high-ddth3} hold, it suffices to design the control input to satisfy \eqref{eq:high-ddth4} and \eqref{eq:high-ddth5-ineq}. To satisfy \eqref{eq:high-ddth4}, the control law is given as 
\begin{align}
    U(t) &= c_1 c_2 c_3 h_1(t) - (c_1 c_2 + c_1 c_3 + c_2 c_3 ) h_2 (t) \notag\\
    & \quad - (c_1 + c_2 + c_3) p (t).  \label{eq:high-nonover}
\end{align}
We require the positivity of all CBFs at $t=0$. The properties $h_1(0) \geq 0$ and $h_2(0) \geq 0$ hold with Assumptions \ref{ass:h2} and \ref{ass:h1}. From the conditions  $h_3(0) \geq 0$, $h_4(0) \geq 0$, and $h_5(0) \geq 0$, the following conditions on the gain parameters arise: 
\begin{align} \label{eq:high-c1-cond}
    c_1 & \geq \max \left\{\fr{q_{\rm c}(0)}{\sigma(0)},  -\fr{p(0)}{q_{\rm c}(0)} \right\} >0, \\
    \label{eq:high-c2-cond} 
    c_2 & \geq \fr{p(0) + c_1 q_{\rm c}(0)}{c_1 \sigma(0) - q_{\rm c}(0)} >0 . 
\end{align}

Let us now examine the chained structure \eqref{eq:high-ddth1}--\eqref{eq:high-ddth5-ineq}. A clearer ordering of these five subsystems is
\begin{equation}
    \dot h_4 = - c_3 h_4
\end{equation}
\vspace*{-.7cm}
\begin{align}
    \dot h_3 &=- c_2 h_3+ h_4, & \dot h_5&= -c_2 h_5+ c_3 h_4\\
    \dot h_1&=- c_1 h_1+ h_3, & \dot h_2&= -c_1 h_2+ h_5.
\end{align}
For even more clarity, we give the following flow diagram among these five positive subsystems:
\begin{equation}
    h_4 
    \begin{array}{lcccl}
   & h_3 & \longrightarrow & h_1\\
     \nearrow \\
     \\
    \searrow 
    \\ & h_5 & \longrightarrow & h_2
    \end{array}
\end{equation}

\begin{theorem}
Let $s(0)$ and $T(x,0)$ satisfy Assumptions \ref{ass:initial}--\ref{ass:h1}. Consider the closed-loop system \eqref{eq:stefanPDE}--\eqref{eq:stefanIC}, \eqref{eq:stefanODE}, and \eqref{eq:stefan-high-act-1} and \eqref{eq:stefan-high-act-2} with the non-overshooting control law \eqref{eq:high-nonover}, where the gain parameters satisfy \eqref{eq:high-c1-cond}, \eqref{eq:high-c2-cond}, and $c_3 > 0$. Then, all CBFs defined as \eqref{eq:h1_def}--\eqref{eq:h_def} satisfy the positivity constraints. This means, in particular, that, for all $q_{\rm c}(0)>0$, and for all real $p(0)$, the interface $s(t)$ does not exceed $s_{\rm r}$, the temperature $T(x,t)$ does not drop below $T_{\rm m}$ at any position $x$ between $0$ and $s(t)$, and the heat flux $q_{\rm c}(t)$ never takes a negative value. 
Furthermore, the closed-loop system is  exponentially stable at the equilibrium $s=s_{\rm r}, T(x,\cdot) \equiv 0, q_{\rm c} =p= 0$,  in the sense of the spatial $L_2$-norm, for all initial conditions in the safe set, i.e., globally. In other words, there exist positive constants $M>0$ and $b>0$ such that the following norm estimate holds: 
\begin{align}
    \Phi(t) \leq M \Phi(0) e^{-b t},  
\end{align}
where 
\begin{align}
\Phi(t):= || T[t] - T_{\rm m} ||^2 +(s(t) - s_{\rm r})^2 + q_{\rm c}(t)^2 + p(t)^2 .
\end{align} 
\end{theorem}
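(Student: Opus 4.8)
The plan is to follow the two-stage template of Theorems~\ref{thm:safety} and \ref{thm:nonover}: first establish positivity of every CBF (safety), and then prove global exponential stability via the same backstepping transformation as in Section~\ref{sec:stability}, augmented to accommodate the longer actuator cascade.

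For the safety part, I would read the chain \eqref{eq:high-ddth1}--\eqref{eq:high-ddth5-ineq}, which the control law \eqref{eq:high-nonover} enforces by design, as a cascade of scalar positive systems, each of the form $\dot h=-c\,h+(\text{nonnegative forcing})$. At the head of the cascade \eqref{eq:high-ddth4} gives $h_4(t)=h_4(0)e^{-c_3 t}\ge 0$ whenever $h_4(0)\ge 0$; by variation of constants the states $h_3$ and $h_5$, forced respectively by $h_4$ and $c_3 h_4$, stay nonnegative, and in turn $h_1$ and $h_2$, forced by $h_3$ and $h_5$, remain nonnegative. The gain conditions \eqref{eq:high-c1-cond}, \eqref{eq:high-c2-cond} are precisely what make $h_3(0),h_4(0),h_5(0)\ge 0$, while Assumptions~\ref{ass:h2} and \ref{ass:h1} supply $h_1(0),h_2(0)\ge 0$. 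Once $h_1\ge 0$ and $h_2\ge 0$ hold for all $t$, Lemma~\ref{lem:overshoot} yields $h(x,t)\ge 0$ and $0<s_0\le s(t)\le s_{\rm r}$, and Lemma~\ref{lem1} gives $\dot s\ge 0$, completing the safety claims.

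For stability, I would reuse the backstepping transformation \eqref{eq:DBST}--\eqref{inv-gain} and its inverse \eqref{inv-trans} verbatim, since the Stefan PDE and the interface ODE are unchanged and only the evolution of the boundary flux is modified. This produces the same target system \eqref{tarPDE}--\eqref{tarODE}, whose boundary condition is forced by $h_3$; the sole difference is that now $\dot h_3=-c_2 h_3+h_4$ from \eqref{eq:high-ddth3} rather than the decoupled \eqref{eq:ddth3}. Hence the estimate \eqref{app:dotV2} for $V$ in \eqref{app:lyap} still holds, giving $\dot V\le -bV+(2 s_{\rm r}/k^2)\,h_3^2+a\dot s\,V$. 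The new ingredient is to replace the single term \eqref{eq:Vh-def} by a weighted sum $V_{\rm chain}=\tfrac12\sum_i p_i h_i^2$ over the cascade states and to absorb, by Young's inequality, each cross term (such as $p_3 h_3 h_4$ and $c_3 p_5 h_5 h_4$) introduced by the couplings $h_4\to h_3$ and $h_4\to h_5$. Choosing the weights hierarchically---large for the upstream states $h_4,h_3,h_5$ relative to the downstream $h_2$---so that each diagonal dissipation term dominates the cross terms flowing into it, one keeps $\dot{\bar V}$ negative definite while preserving the structure $\dot{\bar V}\le -\bar b\,\bar V+a\dot s\,V$ as in \eqref{eq:Vbar-der}, where $\bar V=V+V_{\rm chain}$.

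With this composite functional, invoking $\dot s\ge 0$ (Lemma~\ref{lem1}) and $s(t)\le s_{\rm r}$ from the safety step, the comparison argument of Section~\ref{sec:stability} gives $\bar V(t)\le e^{a s_{\rm r}}\bar V(0)e^{-\bar b t}$. Since $q_{\rm c}=h_2$ and $p=h_5-c_1 h_2$ are recovered linearly from the exponentially decaying cascade states, and $\|h[t]\|$ and $X(t)$ are recovered from $w$ and $X$ through the invertible transformation \eqref{inv-trans}, the augmented norm $\Phi(t)$ is equivalent to $\bar V(t)$ up to constants, which yields the estimate \eqref{eq:norm-estimate}. The main obstacle I anticipate is exactly this Lyapunov redesign for the lengthened cascade: in the single-integrator case the decoupled $\dot h_3=-c_2 h_3$ lets $V_h=\tfrac{p}{2}h_3^2$ cancel the boundary term cleanly, whereas here the upstream coupling from $h_4$ forces a careful, ordered choice of the weights $p_i$ so that every cross term is dominated without destroying the $a\dot s\,V$ term on which the finite-$\int\dot s$ (interface-growth) argument depends.
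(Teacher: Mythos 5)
Your proposal is correct and follows essentially the same route as the paper: the safety half is exactly the paper's argument (positivity of the cascade $h_4 \to \{h_3,h_5\} \to \{h_1,h_2\}$, seeded by the gain conditions \eqref{eq:high-c1-cond}, \eqref{eq:high-c2-cond} ensuring $h_3(0),h_4(0),h_5(0)\geq 0$ together with Assumptions \ref{ass:h2}--\ref{ass:h1}, then Lemmas \ref{lem1} and \ref{lem:overshoot}). For stability the paper leaves the details implicit, relying on Section \ref{sec:stability}; your observation that the transformation \eqref{eq:DBST} and estimate \eqref{app:dotV2} carry over unchanged, with only $\dot h_3 = -c_2 h_3 + h_4$ modified, and your hierarchically weighted functional $V_{\rm chain}$ preserving $\dot{\bar V} \leq -\bar b \bar V + a\dot s\, V$ is precisely the natural completion of that argument.
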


While in this section we pursued just a non-overshooting design for regulation to the barrier, it is straightforward to also design QP safety filters like in Sections \ref{sec:QP} and \ref{sec:above} for the system with an extra integrator, treated in this section. 

Furthermore, there is no obstacle except further algebra for extending the non-overshooting and QP safety filter designs from the single and double integrator models to a full integrator chain model with the heat flux as its first state,
\begin{align}
    \dot q_c(t) & = p_1(t), \\
    \dot p_{i}(t) & = p_{i+1}(t), \quad \forall i = 1, \dots, n-2\\
    \dot p_{n-1}(t) & = U(t). 
\end{align}
The most interesting new element in the general case are the extra conditions on the gains $c_1, \ldots, c_n$. There would be $n$ conditions for the gain parameter $c_1$, $n-1$ conditions for $c_2$, and, continuing in the same pattern, finally, one condition for $c_{n-1}$, while $c_n$ would be an arbitrary positive constant.  


\section{Safety for the Two-phase Stefan system under disturbance} \label{sec:twophase} 

\begin{figure}[t]
\centering
\includegraphics[width=\linewidth]{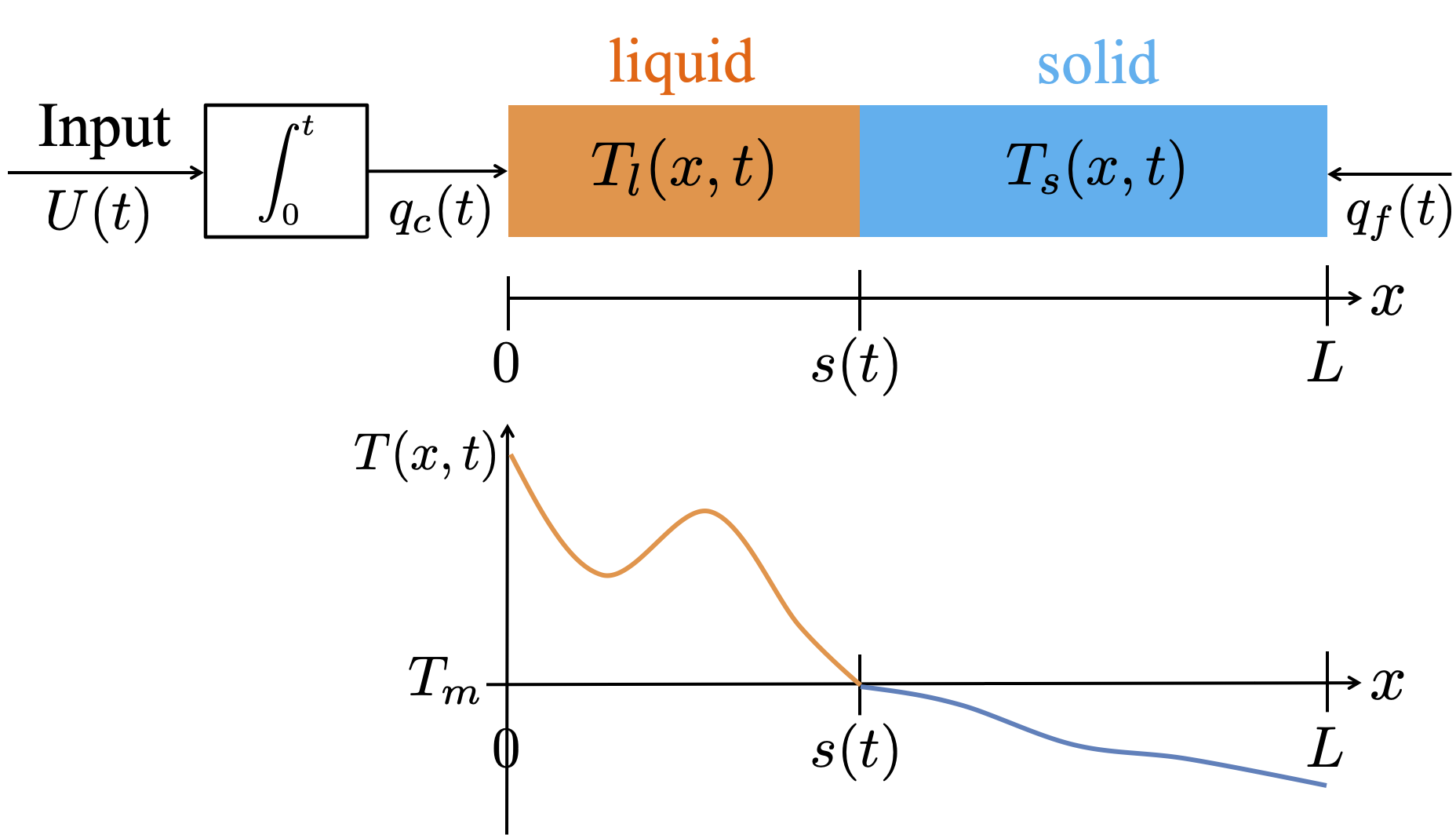}\\
\caption{Schematic of the two-phase Stefan problem with disturbance at $x = L$.}
\label{fig:2phstefan}
\end{figure}

In this section, we extend the safety design to the "two-phase" Stefan problem, where the interface dynamics is affected by a heat loss modeled by the temperature dynamics in the solid phase, following the work in \cite{koga2019twophase}. Additionally, the solid phase temperature is affected by a heat loss caused at the end boundary of the solid phase, which serves as a disturbance in the system. The safety constraint and closed-loop analysis for such two-phase Stefan system with disturbance have been studied in \cite{koga2021towards}, by means of Input-to-State Stability (ISS). In this paper, we further incorporate the actuator dynamics, and tackle the safety verification by utilizing CBFs and nonovershooting regulation. The configuration of the two-phase Stefan problem is depicted in Fig. \ref{fig:2phstefan}. 

\subsection{Model and Constraint}

The governing equations are described by the following coupled PDE-ODE-PDE system: 
\begin{align}\label{2ph-sys1}
 \fr{\pa T_{{\rm l}}}{\pa t}(x,t) &=\alpha_{{\rm l}}  \fr{\pa^2 T_{{\rm l}}}{\pa x^2}(x,t),  \hspace{1mm} \textrm{for} \hspace{1mm} t > 0,  \hspace{1mm} 0< x< s(t),\\
\label{2ph-sys2} \fr{\pa T_{{\rm l}}}{\pa x}(0,t) &= -q_{{\rm c}}(t)/k_{{\rm l}}, \hspace{1mm} T_{{\rm l}}(s(t),t) =T_{{\rm m}}, \textrm{ for } t >0, \\
\label{2ph-sys3} \fr{\pa T_{{\rm s}}}{\pa t}(x,t) &=\alpha_{{\rm s}}  \fr{\pa^2 T_{{\rm s}}}{\pa x^2}(x,t), \hspace{1mm} \textrm{for} \hspace{1mm} t > 0,  \hspace{1mm} s(t)< x< L, \\
\label{2ph-sys4} \fr{\pa T_{{\rm s}}}{\pa x}(L,t) &= - q_{{\rm f}}(t)/k_{{\rm s}},\hspace{1mm} T_{{\rm s}}(s(t),t) =T_{{\rm m}},  \textrm{ for } t >0, \\
\label{2ph-actuator} \dot q_{\rm c}(t) & = U(t), \\
\label{2ph-sys5} \gam \dot{s}(t) &= - k_{{\rm l}} \fr{\pa T_{{\rm l}}}{\pa x}(s(t),t)+k_{{\rm s}} \fr{\pa T_{{\rm s}}}{\pa x}(s(t),t),
\end{align}
where $\gamma = \rho_{{\rm l}} \Delta H^*$, and all the variables denote the same physical value with the subscript "l" for the liquid phase and "s" for the solid phase, respectively. The boundary condition of the solid phase temperature given in \eqref{2ph-sys4} is affected by an unknown heat loss, where $q_{\rm f}(t) \geq 0$ denotes the magnitude of the cooling heat flux at the side of the solid phase, which serves as a disturbance of the system. The solid phase temperature must be lower than the melting temperature, which serves as one of the conditions for the model validity. Namely, we require
\begin{align}\label{valid1-2ph}
T_{{\rm l}}(x,t) \geq& T_{{\rm m}}, \quad \forall x\in(0,s(t)), \quad \forall t>0, \\
\label{valid2-2ph}T_{{\rm s}}(x,t) \leq& T_{{\rm m}}, \quad \forall x\in(s(t),L), \quad \forall t>0, \\
\label{valid3-2ph} 0< &s(t)<L, \quad \forall t>0. 
\end{align}

The following assumption on the initial data \\$(T_{{\rm l},0}(x), T_{{\rm s},0}(x), s_0) := (T_{{\rm l}}(x,0), T_{{\rm s}}(x,0), s(0))$ is imposed.  
\begin{assum}\label{initial-2ph} 
$0<s_0<L$,  $T_{{\rm l},0}(x) \in C^0([0, s_0];[T_{\rm m}, +\infty))$, $T_{{\rm s},0}(x) \in C^0([s_0, L];( - \infty, T_{\rm m}])$, and $T_{{\rm l},0}(s_0) = T_{{\rm s},0}(s_0) = T_{\rm m}$. Also, there exists constants $\bar T_{\rm l}, \bar T_{\rm s}, \eta_{\rm l}, \eta_{\rm s} >0$ such that 
\begin{align}
0 \leq T_{{\rm l},0}(x) - T_{\rm m} \leq \bar T_{\rm l} \left( 1 - \exp (L \eta_{\rm l} \alpha_{\rm l}^{-1} (x - s_0)\right) 
\end{align} for $x \in [0, s_0]$ and 
\begin{align}
- \bar T_{\rm s} \left( 1 - \exp (L \eta_{\rm s} \alpha_{\rm s}^{-1} (x - s_0)\right)  \leq T_{{\rm s},0}(x) - T_{\rm m} \leq 0 
\end{align}
for $x \in [s_0, L]$. 
 \end{assum}
 The following lemma is provided to ensure the conditions of the model validity. 

\begin{lemma}\label{solidvalid}
Let Assumption \ref{initial-2ph} hold, $q_{{\rm c}}(t)$ and $q_{\rm f}(t)$ be bounded nonnegative continuous functions $q_{\rm c} \in C^0(\R_+; [0,\bar q_{\rm c}))$, $q_{\rm f} \in C^0(\R_+; [0,\bar q_{\rm f}))$ for some $\bar q_{\rm c}, \bar q_{\rm f} >0$, and
\begin{align} \label{cond:2phposed}
\max \left\{ \frac{k_{\rm l} \ep_{\rm l}}{\alpha_{\rm l}} \left( 1 + \frac{\alpha_{\rm l}}{L^2 \eta_{\rm l}} \right) ,  \frac{k_{\rm s} \ep_{\rm s}}{\alpha_{\rm s}} \left( 1 + \frac{\alpha_{\rm s}}{L^2 \eta_{\rm s}} \right) \right\} < \frac{\gamma}{4} , 
\end{align} 
hold, where 
\begin{align}
\ep_{\rm l} := \max\left\{\bar T_{\rm l}, \bar q_{\rm c} L k_{\rm l}^{-1} \right\}, \hspace{2mm} \ep_{\rm s} := \max\left\{\bar T_{\rm s}, \bar q_{\rm f} L k_{\rm s}^{-1} \right\}.
\end{align} 
Furthermore, suppose it holds 
\begin{align} \label{2ph-validcondition} 
0< \gam s_{\infty} + \int_0^{t} (q_{{\rm c}}(\tau) - q_{\rm f}(\tau))  {\rm d}\tau < \gam L ,
\end{align}
for all $t \geq 0$, where 
\begin{align}
s_{\infty}:= & s_0 + \frac{k_{{\rm l}}}{\alpha_{{\rm l}} \gam} \int_{0}^{s_0} (T_{{\rm l},0}(x) - T_{{\rm m}}) {\rm d}x  \notag\\
& + \frac{k_{{\rm s}}}{\alpha_{{\rm s}}\gam} \int_{s_0}^{L} (T_{{\rm s},0}(x) - T_{{\rm m}}) {\rm d}x. \label{eq:2ph-sinf} 
\end{align}
Then there exists a unique solution to \eqref{2ph-sys1}--\eqref{2ph-sys5} which satisfies \eqref{valid1-2ph}--\eqref{valid3-2ph}. 
\end{lemma}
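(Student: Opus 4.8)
The plan is to treat $q_{\rm c}$ and $q_{\rm f}$ as prescribed bounded nonnegative inputs, so that the actuator ODE \eqref{2ph-actuator} is irrelevant to well-posedness and the system \eqref{2ph-sys1}--\eqref{2ph-sys5} reduces to a classical two-phase moving-boundary problem. Local existence and uniqueness of a classical solution on a maximal interval $[0,t_{\max})$ then follow from the fixed-point construction used in \cite{koga2019twophase,koga2021towards,alexiades2018mathematical}; the whole task is to produce a priori bounds that simultaneously certify \eqref{valid1-2ph}--\eqref{valid3-2ph} and force $t_{\max}=+\infty$. Throughout I write $\theta_{\rm l}:=T_{\rm l}-T_{\rm m}$ and $\theta_{\rm s}:=T_{\rm s}-T_{\rm m}$.

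First I would settle the two temperature signs. On the liquid cell $(0,s(t))$, $\theta_{\rm l}$ solves the heat equation with $\theta_{\rm l}(s(t),t)=0$ and inflow boundary flux $-k_{\rm l}\pa_x\theta_{\rm l}(0,t)=q_{\rm c}(t)\ge 0$; on the solid cell $(s(t),L)$, $\theta_{\rm s}$ solves the heat equation with $\theta_{\rm s}(s(t),t)=0$ and cooling flux $-k_{\rm s}\pa_x\theta_{\rm s}(L,t)=q_{\rm f}(t)\ge 0$. Since the initial data obey $\theta_{\rm l,0}\ge 0$ and $\theta_{\rm s,0}\le 0$ by Assumption \ref{initial-2ph}, the weak maximum principle together with Hopf's lemma yield $\theta_{\rm l}\ge 0$ and $\theta_{\rm s}\le 0$ on their respective cells, which is exactly \eqref{valid1-2ph}--\eqref{valid2-2ph} and also pins the signs of the interface gradients. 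This is the two-phase analogue of Lemma \ref{lem1}.

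Next comes the energy identity, which is the spine of the interface bound. Multiplying \eqref{2ph-sys1} by $k_{\rm l}/\alpha_{\rm l}$ and \eqref{2ph-sys3} by $k_{\rm s}/\alpha_{\rm s}$, integrating over the moving cells (using $\theta(s(t),t)=0$ so the Leibniz boundary terms vanish), and eliminating the interface gradients through the Stefan condition \eqref{2ph-sys5} yields, after one integration in time,
\[ \gam s(t)+\frac{k_{\rm l}}{\alpha_{\rm l}}\int_0^{s(t)}\theta_{\rm l}(x,t)\,{\rm d}x+\frac{k_{\rm s}}{\alpha_{\rm s}}\int_{s(t)}^{L}\theta_{\rm s}(x,t)\,{\rm d}x=\gam s_{\infty}+\int_0^{t}\bigl(q_{\rm c}(\tau)-q_{\rm f}(\tau)\bigr)\,{\rm d}\tau, \]
with $s_\infty$ as in \eqref{eq:2ph-sinf}. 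Denote the right-hand side by $R(t)$; condition \eqref{2ph-validcondition} says $0<R(t)<\gam L$. Write $A(t)$ and $B(t)$ for the liquid and solid integral terms on the left. By the signs just established, $A\ge 0$ with $A=0$ when $s(t)=0$, while $B\le 0$ with $B=0$ when $s(t)=L$. Hence, were $s(t)$ ever to reach $0$, the identity would force $B(t)=R(t)>0$, contradicting $B\le 0$; were it ever to reach $L$, it would force $A(t)=R(t)-\gam L<0$, contradicting $A\ge 0$. Therefore $0<s(t)<L$ for all $t\in[0,t_{\max})$, which is \eqref{valid3-2ph}.

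It remains to convert these into genuine uniform a priori bounds and extend the solution globally, and this is where the quantitative hypotheses do their work and where the main difficulty lies. I would construct time-dependent exponential supersolutions for $\theta_{\rm l}$ and subsolutions for $\theta_{\rm s}$, shaped by the envelopes in Assumption \ref{initial-2ph}, and use the comparison principle to bound $\theta_{\rm l}$ from above and $\theta_{\rm s}$ from below for all time in terms of $\ep_{\rm l}$ and $\ep_{\rm s}$. The subtlety is that these comparison functions live on the \emph{unknown, moving} domain, so they are valid super/subsolutions only if the interface speed $\dot s$---which by \eqref{2ph-sys5} is controlled by the boundary gradients, hence by the very temperature bounds one is trying to prove---is small enough; condition \eqref{cond:2phposed}, with its margin $\gam/4$ against the accumulated latent heat, is precisely what closes this loop and yields uniform $L^\infty$ bounds on $T_{\rm l}, T_{\rm s}$ and thus on $\dot s$. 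With $s(t)$ trapped strictly inside $(0,L)$ and all temperatures and gradients uniformly bounded, the solution cannot develop a singularity, so it continues to $t_{\max}=+\infty$, completing the proof. The hardest step is exactly this coupling between the moving boundary and the comparison construction; the sign argument and the energy identity are routine once it is in place.
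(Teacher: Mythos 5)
The first thing to know is that the paper contains no in-house proof of Lemma \ref{solidvalid}: it simply invokes \cite{Cannon71flux} (Theorem 1, p.~4 and Theorem 4, p.~8), notes that those proofs employ the maximum principle, and interprets $s_\infty$ as the limiting interface position under zero inputs. Measured against that, your reconstruction follows the same strategy as the cited work, and the parts you actually carry out are correct. The sign conditions \eqref{valid1-2ph}--\eqref{valid2-2ph} do follow from the maximum principle and Hopf's lemma exactly as in the one-phase Lemma \ref{lem1}. Your energy identity is also right: with $E(t):=\gam s(t)+\frac{k_{\rm l}}{\alpha_{\rm l}}\int_0^{s(t)}(T_{\rm l}-T_{\rm m})\,{\rm d}x+\frac{k_{\rm s}}{\alpha_{\rm s}}\int_{s(t)}^{L}(T_{\rm s}-T_{\rm m})\,{\rm d}x$ one gets $\dot E=q_{\rm c}-q_{\rm f}$ and $E(0)=\gam s_\infty$; this is precisely the relation $\dot h_1=-q_{\rm c}+q_{\rm f}$ that the paper itself integrates later in Section \ref{sec:twophase}. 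Your contradiction argument (if $s=0$ then the solid term would have to be positive; if $s=L$ then the liquid term would have to be negative) correctly converts \eqref{2ph-validcondition} into \eqref{valid3-2ph} on the maximal interval of existence.

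The substantive shortfall is that the step carrying all the analytic weight---global continuation, i.e., excluding blow-up of the interface gradients, via exponential super/subsolutions on the moving domain and a bootstrap on $\dot s$---is announced rather than executed. You never exhibit the barriers, never verify they remain super/subsolutions under the induced bound on $\dot s$, and never derive why the specific margin $\gam/4$ in \eqref{cond:2phposed} (rather than some other fraction) closes the loop; the exponential envelopes in Assumption \ref{initial-2ph} are indeed tailored to exactly this construction, but saying so is not doing it. That missing step is precisely the content of Theorem 4 of \cite{Cannon71flux} that the paper outsources, so your sketch is not wrong, merely incomplete at the same point where the paper chooses to cite. As written, the honest conclusion of your argument is a reduction of the lemma to the classical two-phase results of Cannon and Primicerio, which is in fact all the paper itself claims; a self-contained proof would require carrying out the barrier bootstrap in full.
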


Lemma \ref{solidvalid} is proven in \cite{Cannon71flux} (Theorem 1 in p.4 and Theorem 4 in p.8) by employing the maximum principle. The variable $s_{\infty}$ is the final interface position $s_{\infty} = \lim_{t \to \infty} s(t)$ under the zero heat input $q_{{\rm c}}(t) \equiv q_{\rm f}(t) \equiv 0$ for all $t \geq 0$.  For \eqref{2ph-validcondition} to hold for all $t \geq 0$, we at least require it to hold at $t = 0$, which leads to the following assumption. 
\begin{assum}\label{ass:E0}
$s_{\infty}$ given by \eqref{eq:2ph-sinf} satisfies $0< s_{\infty} < L $. 
\end{assum}

\begin{figure}[t]
\centering
\includegraphics[width=\linewidth]{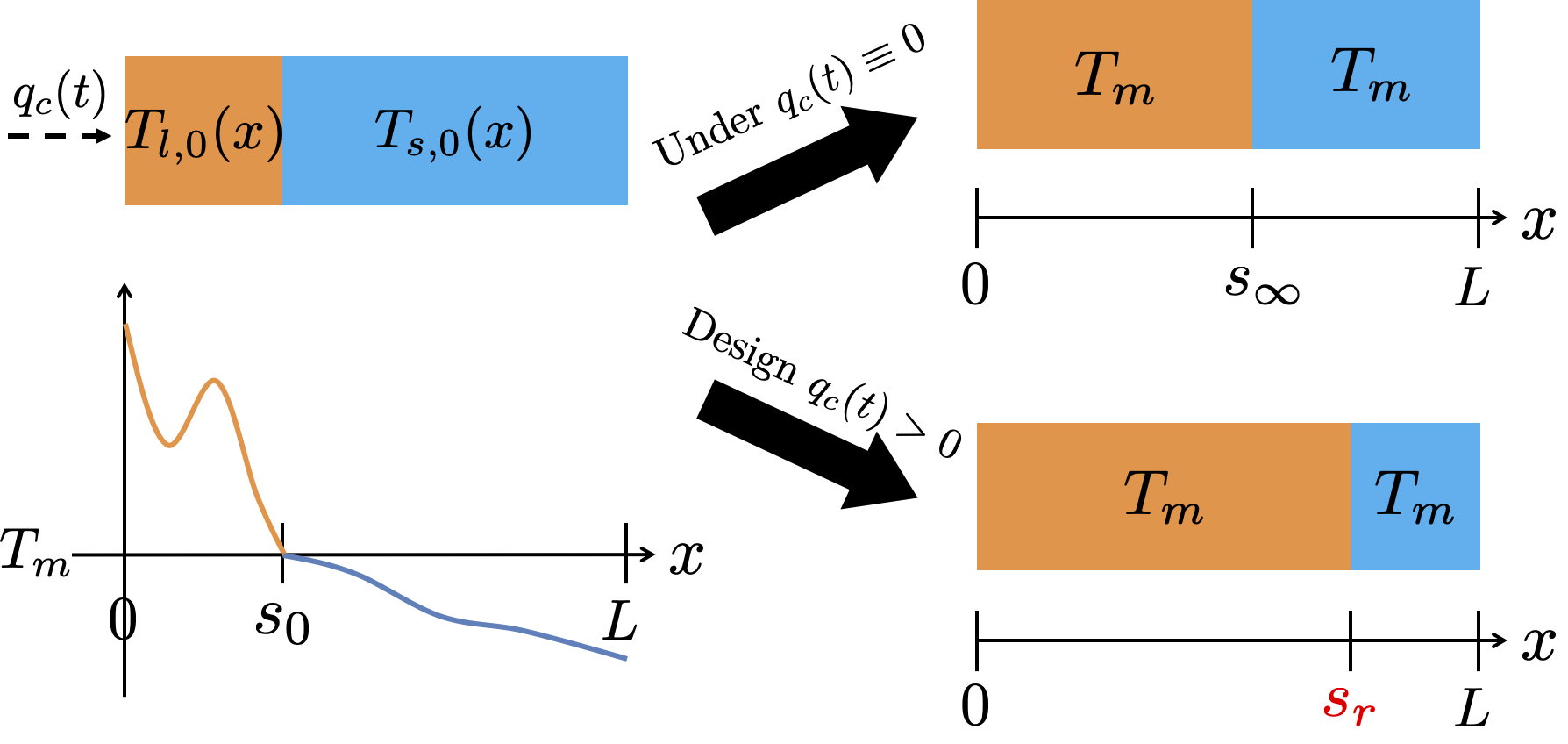}\\
\caption{A graphic interpretation of $s_{\infty}$ and $s_{\rm r}$ under $q_{\rm f} \equiv 0$.}
\label{fig:2phstefan-obj}
\end{figure}

Furthermore, we impose the restriction for the setpoint is given as follows. 
\begin{assum}\label{assum2-2ph}
The setpoint $s_{\rm r}$ satisfies $s_{\infty} < s_{{\rm r}} < L$. 
\end{assum}

Physically, Assumption \ref{ass:E0} states that neither phase disappears under $q_{\rm c}(t) \equiv q_{\rm f}(t) \equiv 0$, and Assumption \ref{assum2-2ph} states that the choice of the setpoint for the melting is far beyond $s_{\infty}$ from the heat input. A graphical illustration of the assumptions can be seen in Fig. \ref{fig:2phstefan-obj}. 

\subsection{Nonovershooting Regulation and Guaranteed Safety} 

Due to the addition of the solid phase temperature dynamics, the energy deficit $\sigma$ in \eqref{eq:h1_def} is reformulated. We consider the following CBFs for the two-phase problem: 
\begin{align}
     h_1(t) :&= \sigma(t)  \nonumber\\
    & =  -\left(\frac{k_{{\rm l}}}{\alpha_{{\rm l}}} \int_{0}^{s(t)} (T_{{\rm l}}(x,t) - T_{{\rm m}}) {\rm d}x \right. \notag\\
&\left. + \frac{k_{{\rm s}}}{\alpha_{{\rm s}}} \int_{s(t)}^{L} (T_{{\rm s}}(x,t) - T_{{\rm m}}) {\rm d}x + \gamma (s(t) - s_{{\rm r}})\right), \label{eq:2ph-h1_def} \\
h_2(t) &= q_{\rm c}(t), \\
h_3(t) &= - q_{\rm c}(t)+c_1 \sigma(t) , \\
h_{\rm l}(x,t) &= T_{\rm l}(x,t) - T_{\rm m}, \\
h_{\rm s}(x,t) &= T_{\rm m} - T_{\rm s}(x,t), \label{eq:2ph-hs_def} 
\end{align}

Then, it holds $\dot h_1 = - h_2 + q_{\rm f}(t)$, thereby the non-overshooting regulation is designed in a same manner as in Section \ref{sec:nonov}, which makes the following relationship to hold: 
\begin{align} 
\label{eq:2ph-ddth1}    \dot h_1 &= -c_1 h_1 + h_3 + q_{\rm f}(t), \\
   \label{eq:2ph-ddth2} \dot h_2 &= - c_1 h_2 + c_2 h_3, \\
    \label{eq:2ph-ddth3} \dot h_3 &= - c_2 h_3 + c_1 q_{\rm f}(t). 
\end{align}
Namely, the nonovershooting regulation is designed as 
\begin{align} \label{eq:2ph-nonover} 
    U^*(\sigma,q_{\rm c}) =  - (c_1 + c_2) q_{\rm c} + c_1 c_2 \sigma  . 
\end{align}
To satisfy the inequality \eqref{cond:2phposed}, we impose the following assumption. 
\begin{assum} \label{2ph:ass:add} 
$ q_{\rm f}(t)\in C^0(\R_+; [0,\bar q_{\rm f}))$ for some $\bar q_{\rm f}>0$ satisfying 
\begin{align} \label{eq:2ph-qbar-cond} 
\bar q_{\rm f} < \min \left\{q_{\rm c}(0) + c_1 \gam s_{\infty}, \fr{c_1 c_2 }{c_1 + c_2} \gam s_{\rm r} \right\},     
\end{align}
and the inequality \eqref{cond:2phposed} holds with 
\begin{align}
\bar q_{\rm c} = \max \left\{q_{\rm c}(0), \fr{c_2}{c_1} (c_1 \sigma(0) - q_{\rm c}(0)), \bar q_{\rm f} \right\}.
\end{align}
\end{assum}
With $q_{\rm f} \geq 0$ in Assumption \ref{2ph:ass:add}, and with the gain condition 
\begin{align} \label{eq:2ph-gain} 
    c_1 \geq \fr{q_{\rm c}(0)}{\sigma(0)}, 
\end{align}
the relationship \eqref{eq:2ph-ddth1}--\eqref{eq:2ph-ddth3} leads to  
\begin{align}
    h_1 \geq 0, \quad h_2 \geq 0, \quad h_3 \geq 0.
\end{align}
Moreover, in the same manner as in Section \ref{sec:above}, an upper bound of the solution to \eqref{eq:2ph-ddth1}--\eqref{eq:2ph-ddth3} is also obtained, which ensures that $q_{\rm c}(t) \leq \bar q_{\rm c}$. Therefore,  with Assumption \ref{2ph:ass:add}, the condition \eqref{cond:2phposed} is satisfied. 

It remains to ensure the condition \eqref{2ph-validcondition}. Taking the time integration to the relation $\dot h_1 = - q_{\rm c}(t) + q_{\rm f}(t)$, and by $h_1(0) = \gam s_{\rm r} - \gam s_{\infty}$, one can obtain 
\begin{align}
    \gam s_{\infty} + \int_0^t (q_{\rm c}(\tau) - q_{\rm f}(\tau)) d\tau = \gam s_{\rm r}- h_1(t). 
\end{align}
Since $h_1 \geq 0$ is already ensured, with Assumption \ref{assum2-2ph}, the right inequality in \eqref{2ph-validcondition} is satisfied. Moreover, by the relation \eqref{eq:2ph-ddth1}--\eqref{eq:2ph-ddth3}, the upper bound of $h_1$ is shown as 
\begin{align}
    h_1(t) \leq \max \left\{h_1(0), \fr{1}{c_1} \left(\max \left\{h_3(0), \fr{c_1}{c_2} \bar q_{\rm f} \right\} + \bar q_{\rm f} \right) \right\} .  
\end{align}
Hence, with \eqref{eq:2ph-qbar-cond} in Assumption \ref{2ph:ass:add}, one can deduce that the left inequality in \eqref{2ph-validcondition} is also satisfied. Thus, applying Lemma \ref{solidvalid}, the safety constraint \eqref{valid1-2ph}--\eqref{valid1-2ph} is satisfied under the nonovershooting design in \eqref{eq:2ph-nonover}. We state the following theorem. 

\begin{theorem} \label{thm:2ph-nonover} 
Let $s(0)$ and $T_{\rm l}(x,0)$, and $T_{\rm s}(x,0)$ satisfy Assumption \ref{initial-2ph}-\ref{assum2-2ph}, and $q_{\rm f}(t)$ satisfy Assumption \ref{2ph:ass:add}. Consider the closed-loop system \eqref{2ph-sys1}--\eqref{2ph-sys5} with the non-overshooting control law \eqref{eq:2ph-nonover}, where the gain parameters satisfy \eqref{eq:2ph-gain} and $c_2 \geq 0$. Then, all CBFs defined as \eqref{eq:2ph-h1_def}--\eqref{eq:2ph-hs_def} satisfy the positivity constraints. This means, in particular, that, for all $q_{\rm c}(0)>0$, the interface $s(t)$ remains inside $(0,L)$, the liquid temperature $T_{\rm l}(x,t)$ does not drop below $T_{\rm m}$ at any position $x$ between $0$ and $s(t)$, the solid temperature $T_{\rm s}(x,t)$ does not go above $T_{\rm m}$ at any position $x$ between $s(t)$ and $L$, and the heat flux $q_{\rm c}(t)$ never takes a negative value. 
Furthermore, the closed-loop system is  exponentially Input-to-State Stability (ISS) at the equilibrium $s=s_{\rm r}, T_{\rm l}(x,\cdot) \equiv T_{\rm m}, T_{\rm s}(x,\cdot) \equiv T_{\rm m}, q_{\rm c} = 0$,  in the sense of the spatial $L_2$-norm, for all initial conditions in the safe set, i.e., globally. In other words, there exist positive constants $M_1>0$, $M_2 >0$, and $b>0$ such that the following norm estimate holds: 
\begin{align} \label{eq:2ph-norm-estimate} 
    \Phi(t) \leq M_1 \Phi(0) e^{-b t} + M_2 \sup_{\tau \in [0, t]} q_{\rm f}(\tau),  
\end{align}
where 
\begin{align} \label{eq:2ph-Phi-def} 
\Phi(t) &:= || T_{\rm l}[t] - T_{\rm m} ||^2 + || T_{\rm s}[t] - T_{\rm m} ||^2 \notag\\
& \quad +(s(t) - s_{\rm r})^2 + q_{\rm c}(t)^2. 
\end{align}

\end{theorem}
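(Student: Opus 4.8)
The plan is to split the claim into its safety part (positivity of all CBFs, hence the constraints \eqref{valid1-2ph}--\eqref{valid3-2ph}) and its stability part (the exponential-ISS estimate \eqref{eq:2ph-norm-estimate}). The safety part is essentially assembled already in the discussion preceding the statement, so I would mostly organize that argument; the stability part is obtained by adapting the Lyapunov construction of Section \ref{sec:stability} to two coupled phases and a disturbance, following \cite{koga2021towards}.

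For safety, I would first check nonnegativity at $t=0$: Assumption \ref{assum2-2ph} gives $h_1(0)=\gamma s_{\rm r}-\gamma s_\infty>0$, the hypothesis $q_{\rm c}(0)>0$ gives $h_2(0)\geq 0$, and the gain condition \eqref{eq:2ph-gain} forces $h_3(0)=-q_{\rm c}(0)+c_1\sigma(0)\geq 0$. The closed loop obeys the disturbance-forced linear cascade \eqref{eq:2ph-ddth1}--\eqref{eq:2ph-ddth3}; since $q_{\rm f}\geq 0$, applying the comparison lemma first to $\dot h_3=-c_2h_3+c_1q_{\rm f}$ and then to the $h_2$- and $h_1$-equations yields $h_1,h_2,h_3\geq 0$ for all $t\geq 0$. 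Reading the same cascade as upper bounds produces $q_{\rm c}(t)=h_2(t)\leq \bar q_{\rm c}$ and the displayed bound on $h_1$, so Assumption \ref{2ph:ass:add} delivers \eqref{cond:2phposed}. Finally, integrating $\dot h_1=-q_{\rm c}+q_{\rm f}$ gives $\gamma s_\infty+\int_0^t(q_{\rm c}-q_{\rm f})=\gamma s_{\rm r}-h_1(t)$; combined with $h_1\geq 0$, Assumption \ref{assum2-2ph}, and \eqref{eq:2ph-qbar-cond}, this verifies both inequalities in \eqref{2ph-validcondition}. Lemma \ref{solidvalid} then supplies existence/uniqueness and the physical constraints, which are exactly the positivity of the spatial CBFs $h_{\rm l}$ and $h_{\rm s}$.

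For stability I would introduce two backstepping transformations, one acting on the liquid profile $h_{\rm l}$ and one on the solid profile $h_{\rm s}$, each taken relative to $X(t):=s(t)-s_{\rm r}$ and chosen so that the $X$-dynamics acquires a stabilizing $-c_1X$ term and each target PDE becomes a heat equation with an interface-motion source $\dot s(t)\,\phi'(\cdot)X$ as in \eqref{tarPDE}, \eqref{tarODE}. The actuator is handled exactly as in Section \ref{sec:stability}, through the decoupled CBF $h_3$ obeying \eqref{eq:2ph-ddth3}, while the disturbance $q_{\rm f}$ now enters in two places: the forcing of $h_3$ and the solid-phase boundary condition inherited from \eqref{2ph-sys4}. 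I would then take a composite Lyapunov functional $\bar V=V_{\rm l}+V_{\rm s}+V_h$, with $V_{\rm l},V_{\rm s}$ the weighted $L_2$ energies of the transformed liquid and solid states augmented by the $X^2$ term and $V_h=\tfrac{p}{2}h_3^2$, and differentiate along the target system to reach a differential inequality of the shape $\dot{\bar V}\leq -\bar b\,\bar V + a\,\dot s(t)\,V + (\text{disturbance terms})$.

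The hard part will be the Lyapunov estimate for the coupled two-phase target system with disturbance, and within it the cross-term $a\,\dot s(t)\,V$. In the one-phase analysis of Section \ref{sec:stability} this term was absorbed using the monotonicity $\dot s\geq 0$ from Lemma \ref{lem1}, which furnished the signed bound $\int_0^t\dot s=s(t)-s(0)\leq s_{\rm r}$ and the factor $e^{a s_{\rm r}}$. In the two-phase model $\dot s$ is no longer sign-definite, because the heat loss $q_{\rm f}\geq 0$ through the solid drives freezing, so the monotonicity argument is unavailable and must be replaced by one relying only on the a~priori bound $0<s(t)<L$ from Lemma \ref{solidvalid}, as in \cite{koga2021towards}; controlling this term is the principal obstacle. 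The remaining disturbance contributions, produced by Young's inequality on the $q_{\rm f}$ forcing in $\dot h_3$ and in the solid boundary condition, I would collect into a single additive channel controlled by $\sup_{\tau\in[0,t]}q_{\rm f}(\tau)$. Integrating the resulting inequality and transferring the bound back to $(T_{\rm l},T_{\rm s},s,q_{\rm c})$ via the invertibility of the transformations, in the spirit of \eqref{eq:DBST}--\eqref{inv-trans}, then yields the exponential-ISS estimate \eqref{eq:2ph-norm-estimate}; everything else parallels Section \ref{sec:stability}.
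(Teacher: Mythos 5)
Your proposal matches the paper's own proof of this theorem: the safety half is exactly the paper's in-text argument (initial positivity of $h_1,h_2,h_3$, the disturbance-forced cascade \eqref{eq:2ph-ddth1}--\eqref{eq:2ph-ddth3} with $q_{\rm f}\geq 0$, the upper bounds yielding \eqref{cond:2phposed} and both sides of \eqref{2ph-validcondition}, and then Lemma \ref{solidvalid} to get the spatial constraints), and for the ISS half the paper gives no argument of its own, stating only that it is identical to the steps in \cite{koga2021towards}. Your Lyapunov sketch for that half---two backstepping transformations, a composite functional, and in particular the correct observation that $\dot s(t)$ is no longer sign-definite under heat loss so the one-phase absorption of the $a\,\dot s\,V$ term via $\dot s\geq 0$ must be replaced by an argument using $0<s(t)<L$---is consistent with, and somewhat more informative than, what the paper itself records.
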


The safety is already proven in this section. The ISS proof is identical to the steps performed in \cite{koga2021towards} (see Section 5-2), which is omitted in this paper.





\section{Application to Additive Manufacturing} \label{sec:AM} 

\begin{figure}[t]
\centering
\includegraphics[width=0.99\linewidth]{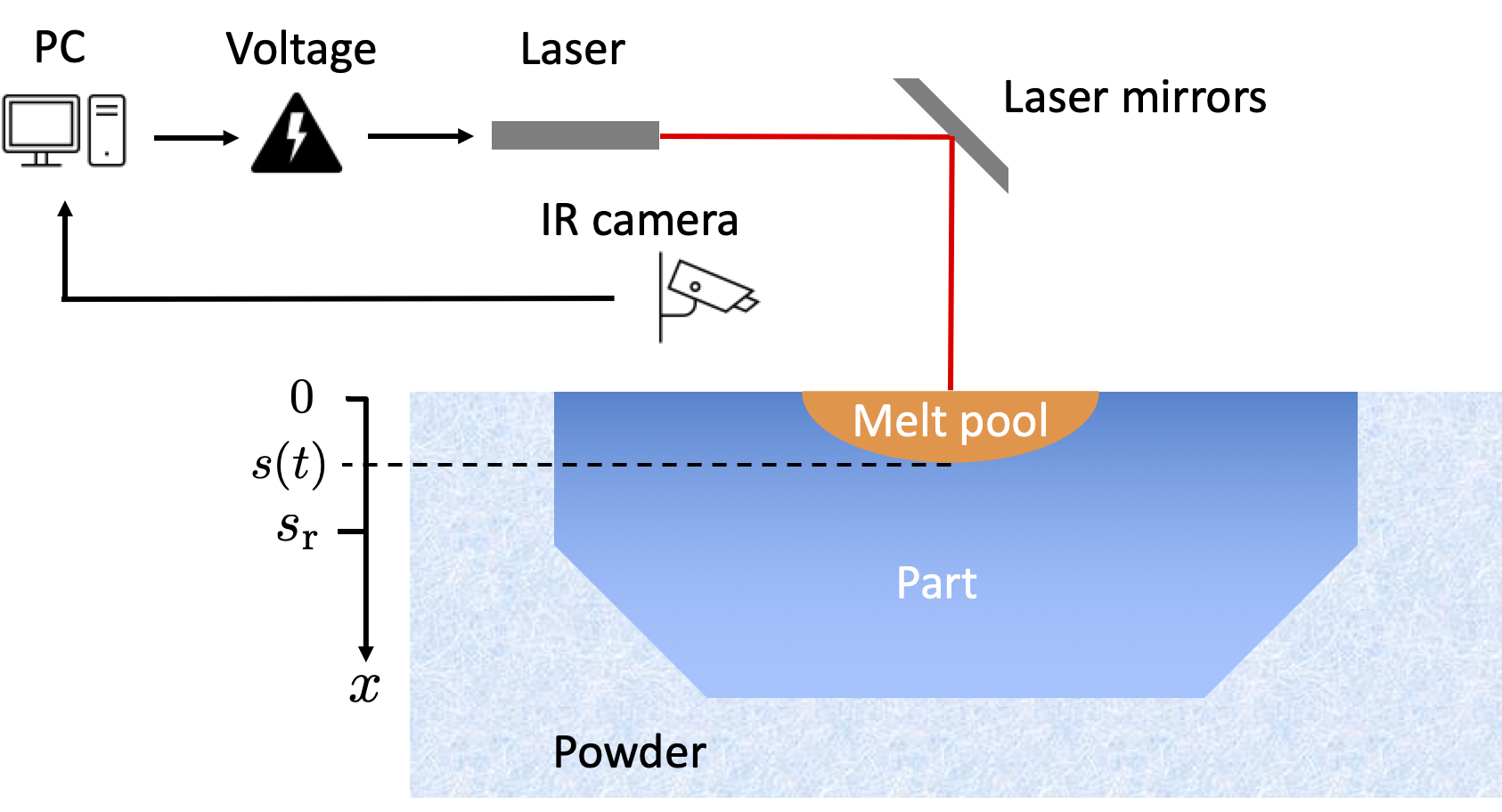}\\
\caption{Schematic of powder bed metal additive manufacturing by selective laser sintering. The safe control should be designed to keep generating positive laser power and to avoid overshoot of the depth of melt pool beyond the desired layer thickness. }
\label{fig:AM}
\end{figure}

\begin{figure*}[t]
\begin{center} 
\subfloat[Interface position converges to  setpoint without overshooting. ]
{\includegraphics[width=2.1in]{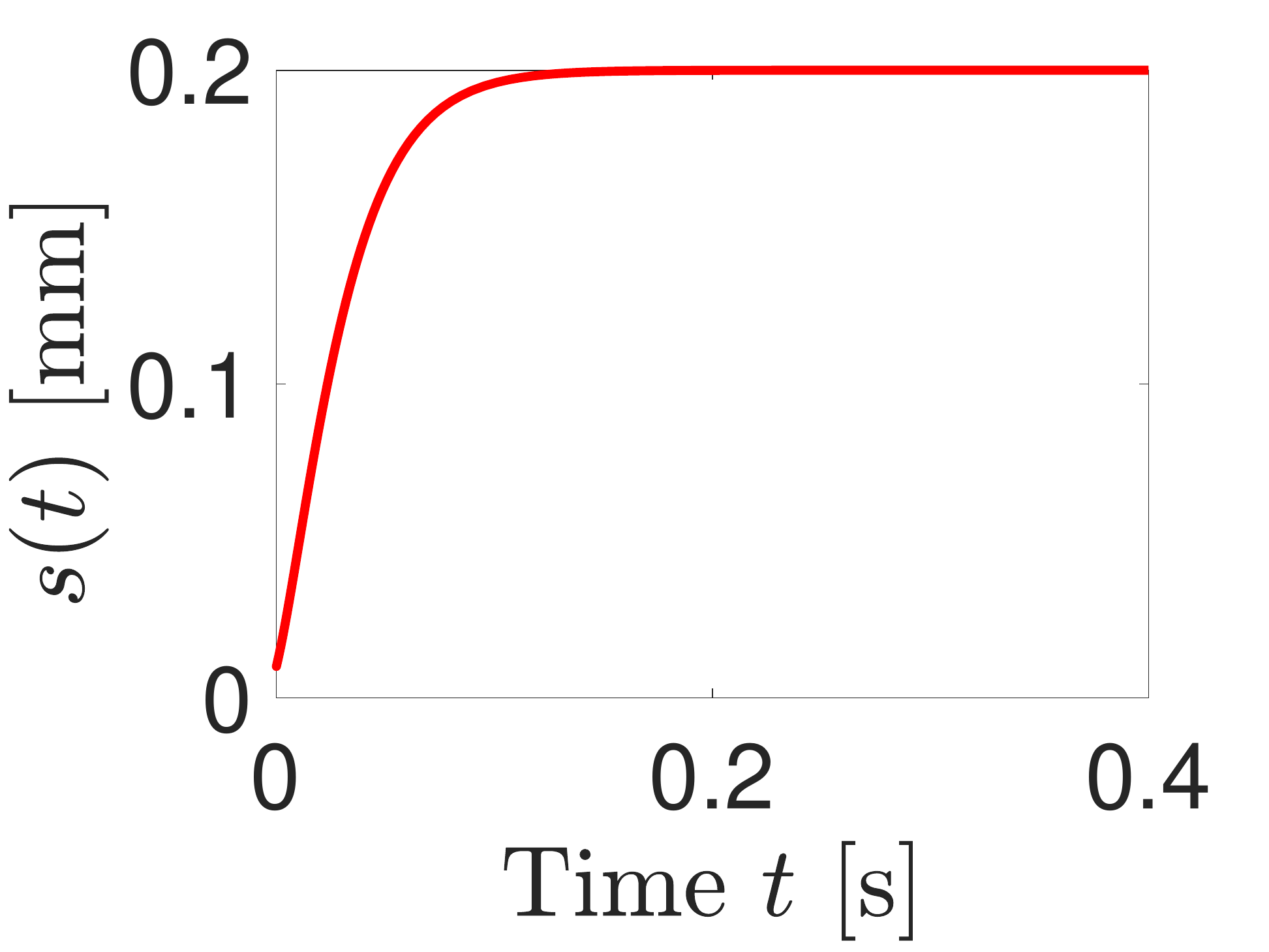}\label{fig:interface}}\hspace{1mm}
\subfloat[Liquid at the location of the laser power $q_{\rm c}$ first warms and then cools, remaining above the melting temperature 1650 Celsius degree. ]
{\includegraphics[width=2.1in]{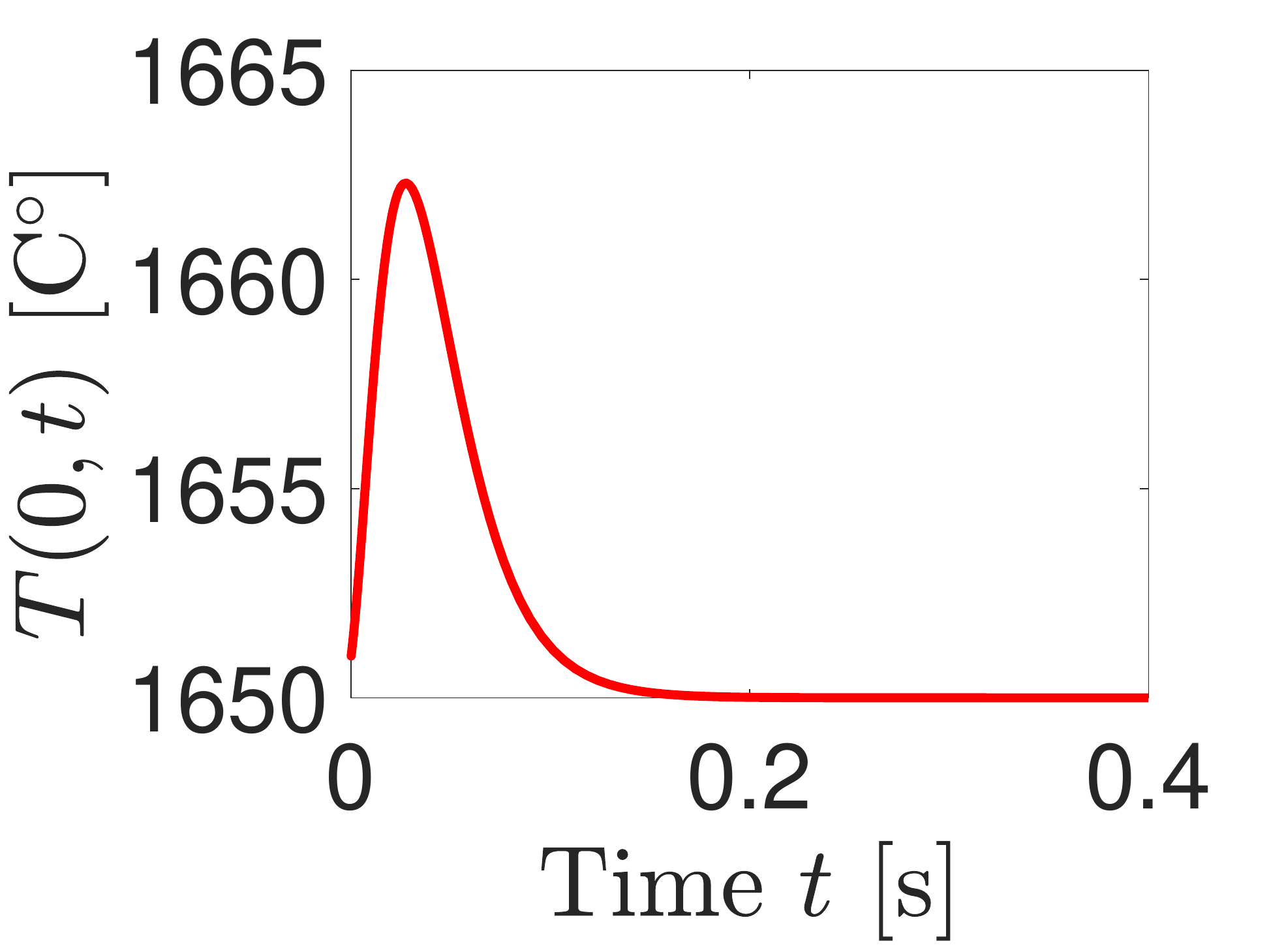}\label{fig:temp}}\hspace{1mm}
\subfloat[Laser power remains positive. ]
{\includegraphics[width=2.1in]{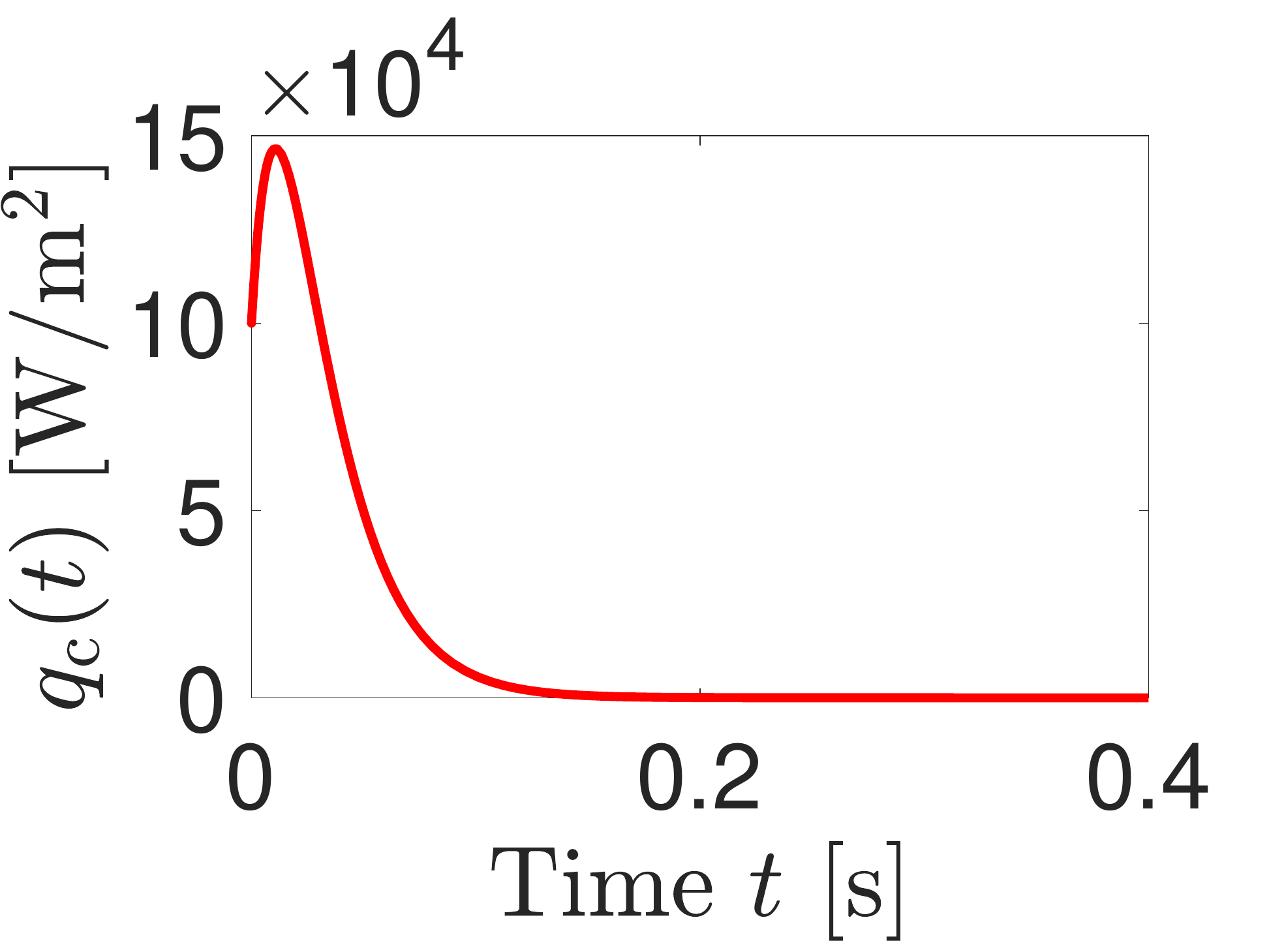}\label{fig:heat}}\\
\subfloat[Energy CBF remains positive. ]
{\includegraphics[width=2.0in]{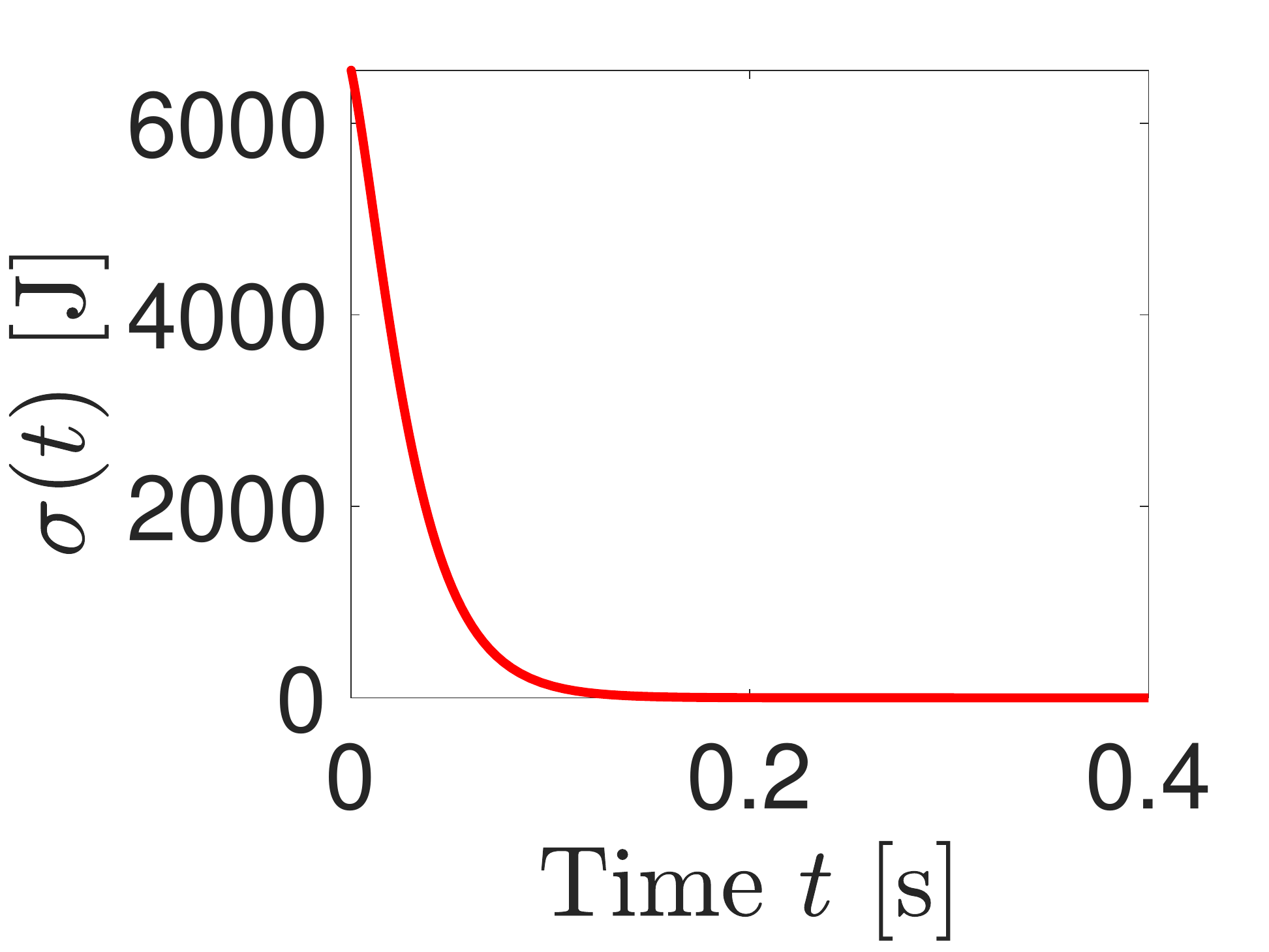}\label{fig:energy}}\hspace{1mm}
\subfloat[Voltage input $U$ achieves non-overshooting regulation of $s$. ]
{\includegraphics[width=2.0in]{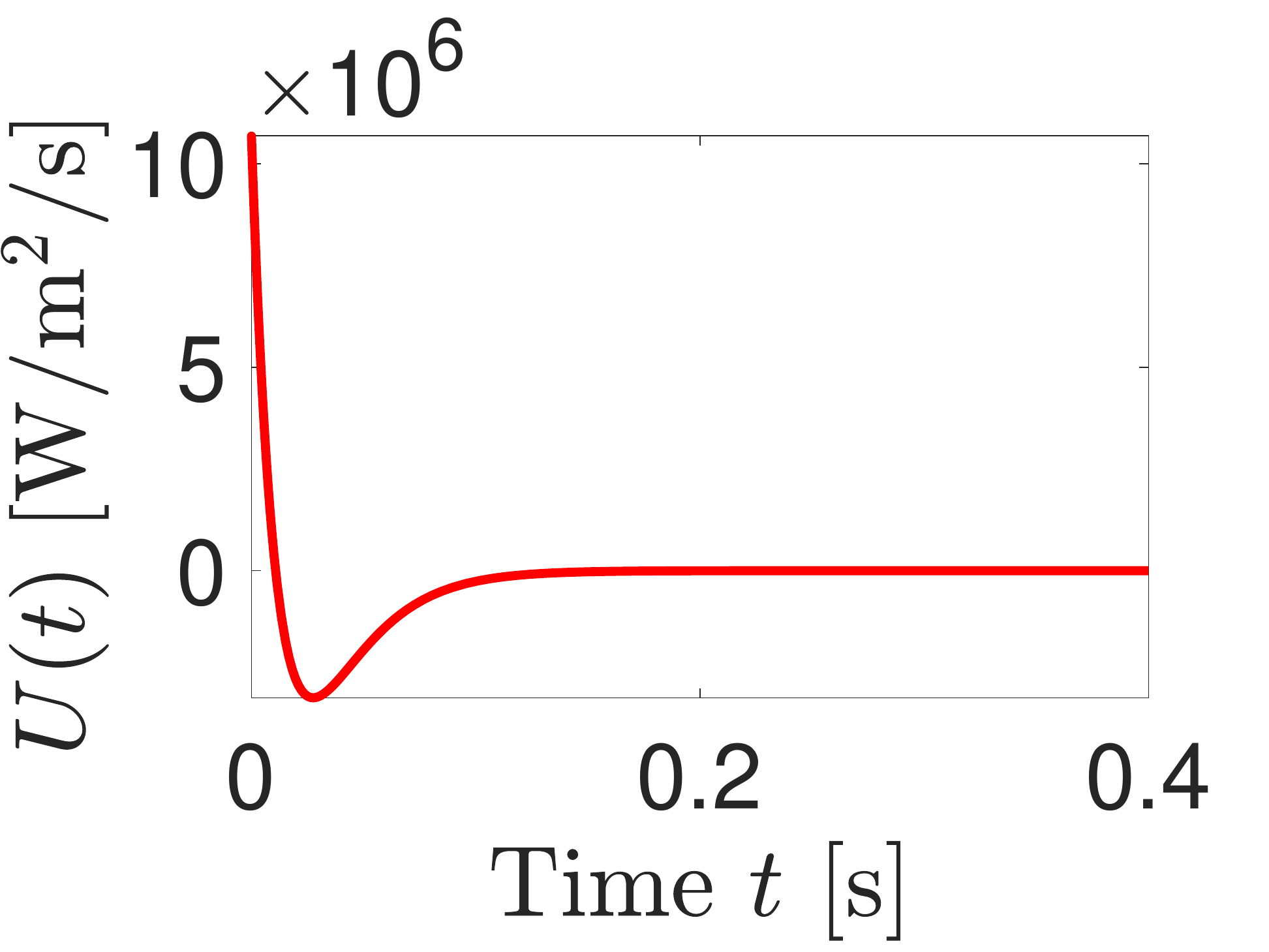}\label{fig:input}}
\subfloat[ Liquid temperature keeps above the melting temperature in all liquid domain. ]
{\includegraphics[width=0.42\linewidth]{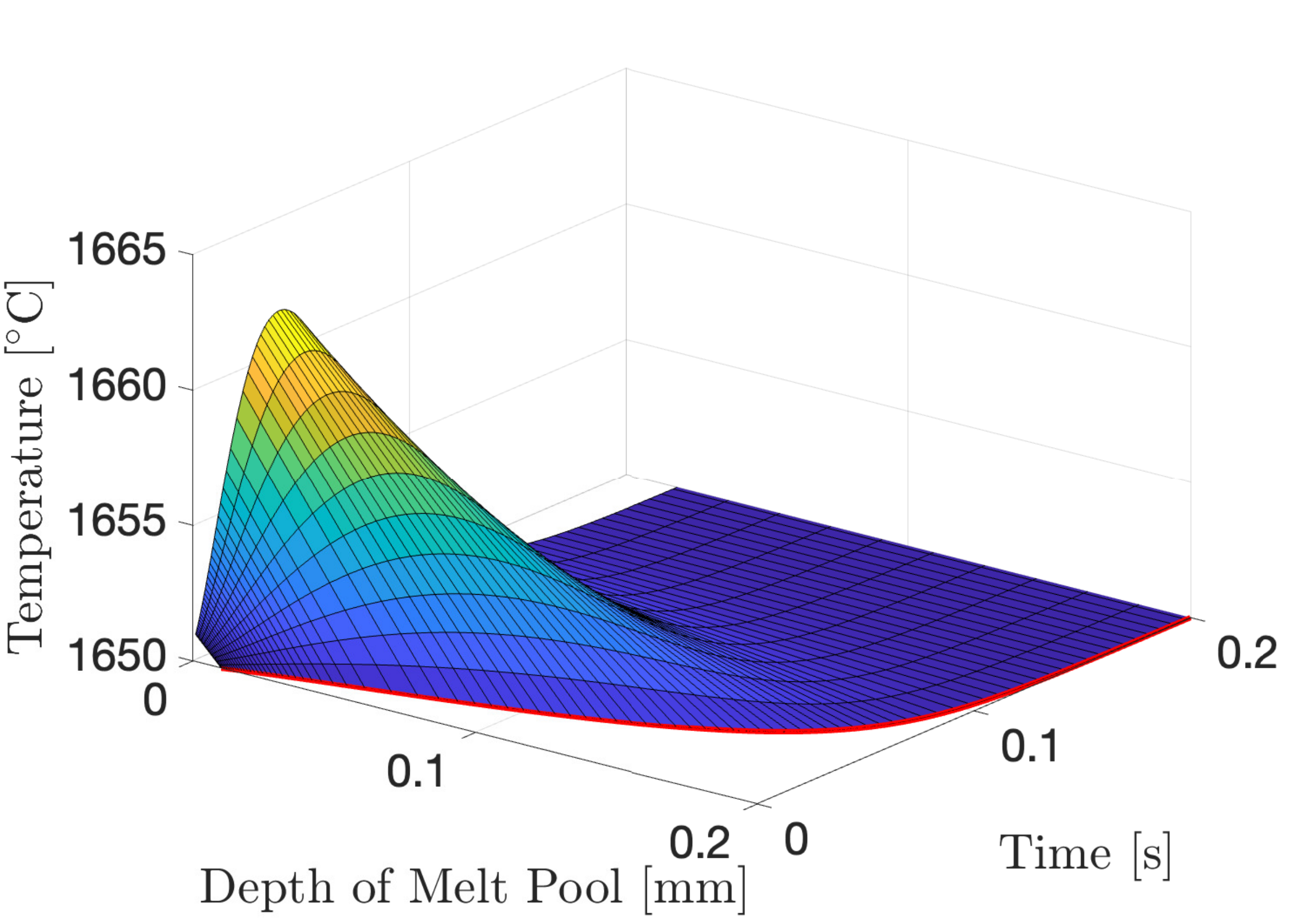}\label{fig:surface}}
\caption{Backstepping control ensures that $s(t)\rightarrow s_{\rm r}$ but prevents $s(t)$ from overshooting $s_{\rm r}$, $T(x,t)$ from undershooting $T_{\rm m}$, and $q_c(t)$ from undershooting zero. This is achieved by non-monotonic $q_{\rm c}(t)$, which overshoots in the positive direction.}
\label{fig:result2}
\end{center} 
\end{figure*}

We apply the safe control methods proposed in Section \ref{sec:nonov} and \ref{sec:QP} to metal additive manufacturing (AM) with selective laser sintering, which has been intensively advanced in the recent decade as observed from the growth in global market \cite{deloitte}. The Stefan model describes the expansion of melt pool inside the powder bed, which is generated by laser input \cite{Zeng12}. We consider controlling the voltage input which manipulates the laser power, for the sake of obtaining the desired depth of the melt pool, which is a thickness of layer. The safe set is the positivity of the laser power and the energy deficit for avoiding the overshooting of the melt pool deeper than the desired thickness. The schematic of the metal AM is depicted in Fig. \ref{fig:AM}. 

We perform the numerical simulation considering a titanium alloy (Ti6Al4V), the physical parameters of which are given in Table 1. The one-dimensional Stefan model is numerically computed by the well-known boundary immobilization method combined with finite difference semi-discretization \cite{kutluay97}.

First, we investigate the performance of the non-overshooting control law \eqref{eq:nonover-fix}. The initial depth  of the melt pool is set to $s_0 =  0.01$ [mm], and the initial temperature profile is set to a linear profile $T_0(x) = \bar T (1 - x / s_0) + T_{\rm m}$ with $\bar T =  1$ [$^\circ$C]. The setpoint position is set as $s_{\rm r} = 0.2$ [mm], which is a reasonable value for layer thickness in metal AM. The control gains are set as $k_1 = 8 q_{\rm c}(0) / \sigma(0) = 64.4$ [/s], $k_2 = 973$ [/s$^2$]. Fig. \ref{fig:result2} depicts the result of the closed-loop response. Fig. \ref{fig:result2} (a) illustrates that the melt pool depth successfully converges to the setpoint monotonically without overshooting, namely, the regulation is achieved. Fig. \ref{fig:result2} (b) shows that the surface temperature of the melt pool is warmed up first and cooled down once the heat is sufficiently added, with maintaining above the melting temperature. Fig. \ref{fig:result2} (c) and (d) depict the two CBFs imposed in the problem, one for an energy deficit and the other for a laser power, both of which satisfy the positivity condition. Fig. \ref{fig:result2} (e) shows the voltage input which does not have any constraint under the non-overshooting control. Fig. \ref{fig:result2} (f) shows 3-D surface plot of the temperature profile in the liquid melt pool, which remains above the melting temperature in all the liquid domain. Hence, the numerical result is consistent with Theorem \ref{thm:nonover} on non-overshooting control in terms of achieving stabilization and safety simultaneously.

	\begin{table}[t]
\caption{Physical properties of Ti6Al4V alloy \cite{mills02}}
\begin{center}
    \begin{tabular}{| l | l | l | }
    \hline
    $\textbf{Description}$ & $\textbf{Symbol}$ & $\textbf{Value}$ \\ \hline
    Density & $\rho$ & 3920 ${\rm kg}\cdot {\rm m}^{-3}$\\ 
    Latent heat of fusion & $\Delta H^*$ & 2.86 $\times $ 10$^5$ ${\rm J}\cdot {\rm kg}^{-1}$ \\ 
    Heat Capacity & $C_{{\rm p}}$ & 830 ${\rm J} \cdot {\rm kg}^{-1}\cdot$ $^\circ {\rm C}^{-1}$  \\  
    Thermal conductivity & $k$ & 32.5 ${\rm W}\cdot {\rm m}^{-1}$  \\
    Melting temperature & $T_{\rm m}$ & 1650 $^\circ$C  \\\hline
    \end{tabular}
\end{center}
\end{table}

\begin{figure*}[t]
\begin{center} 
\subfloat[As a result of commanded heating, interface $s$ settles to constraint $s_{\rm r}$.]
{\includegraphics[width=2.1in]{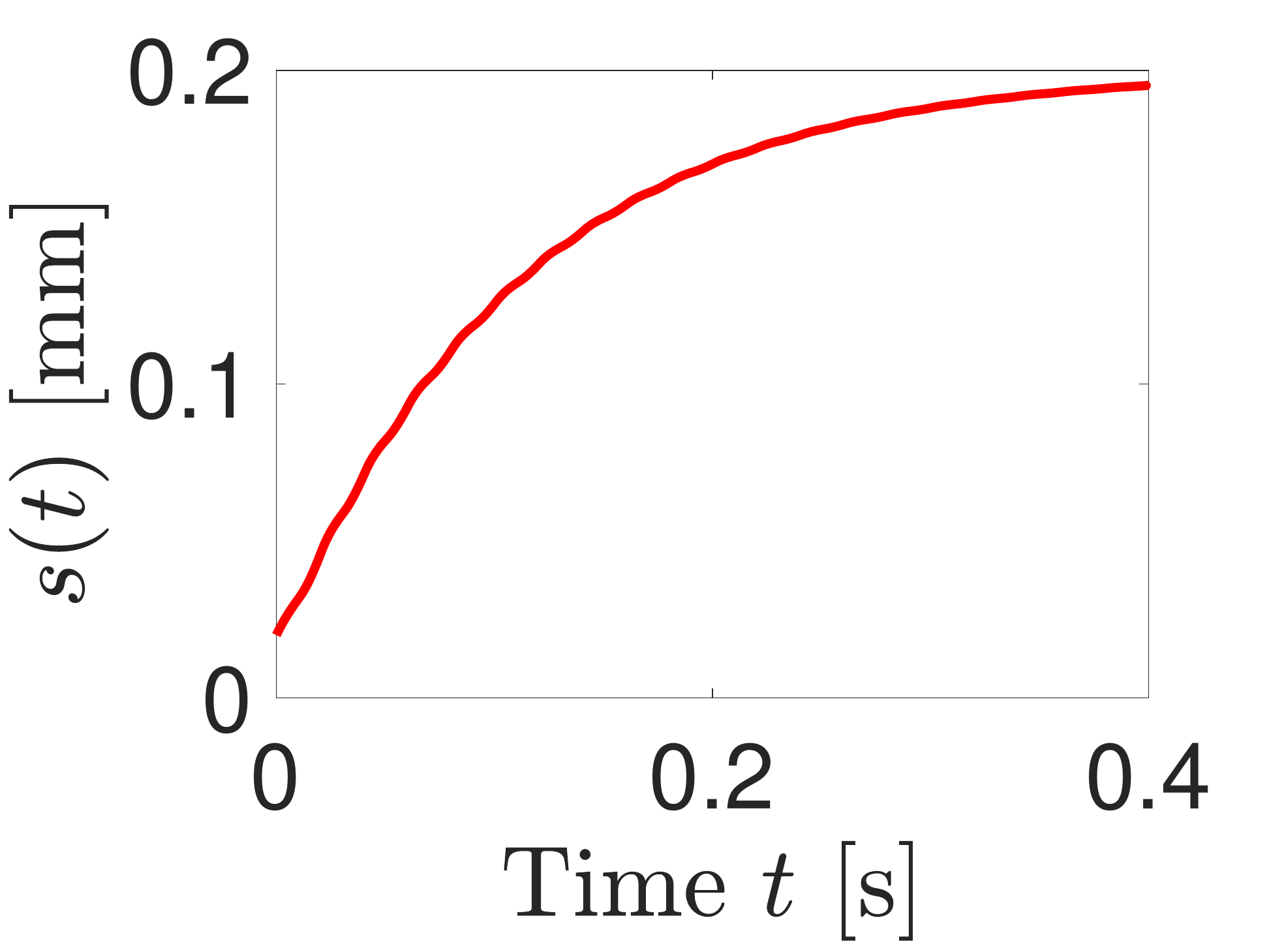}\label{fig:qp_interface}}\hspace{1mm}
\subfloat[Temperature adjacent to heat flux  $q_{\rm c}$ fluctuates but remains above melting. ]
{\includegraphics[width=2.1in]{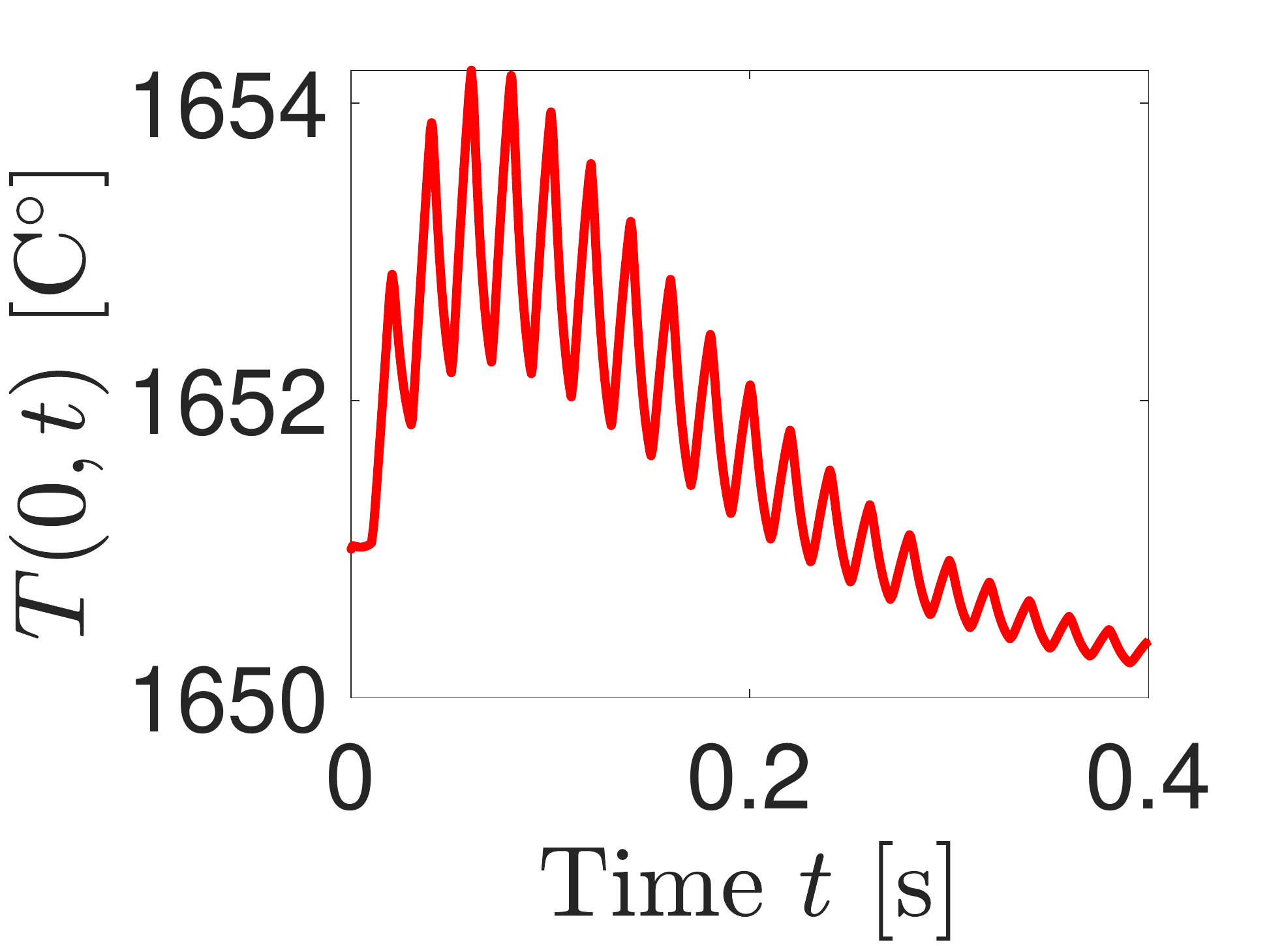}\label{fig:qp_temp}}\hspace{1mm}
\subfloat[Inlet heat flux fluctuates but remains positive---it never cools.]
{\includegraphics[width=2.1in]{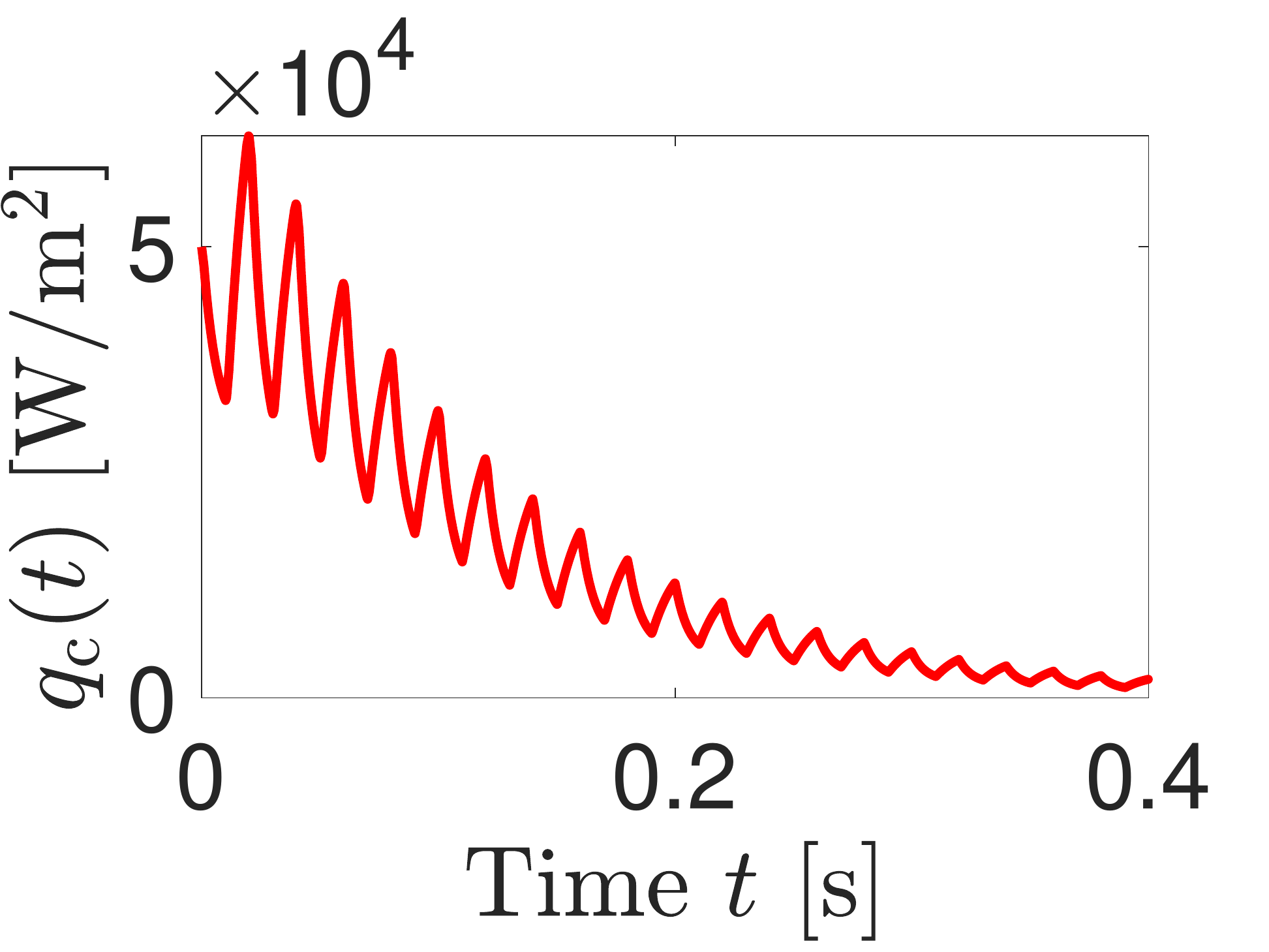}\label{fig:qp_heat}}\\
\subfloat[Energy CBF remains positive.]
{\includegraphics[width=2.1in]{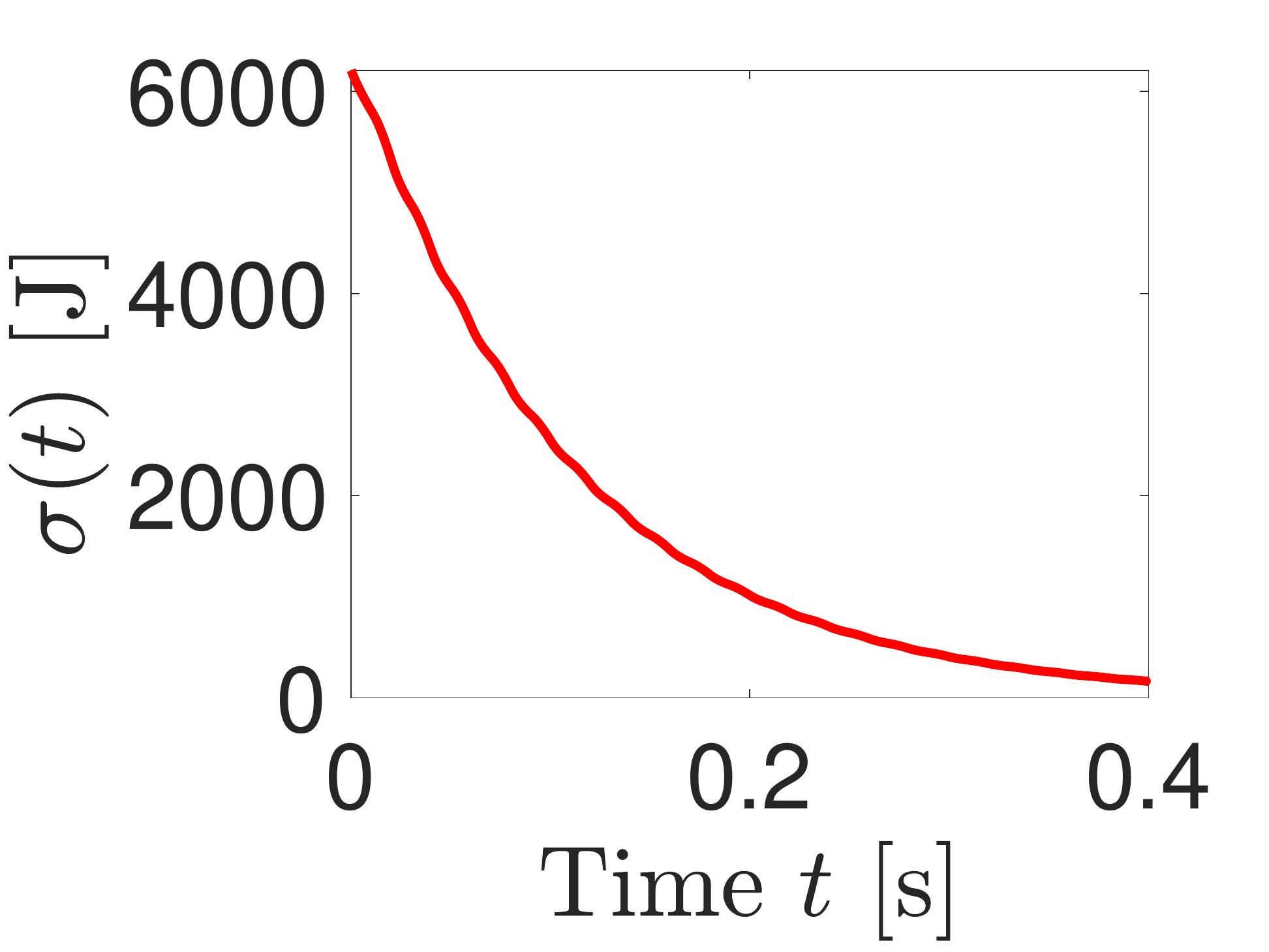}\label{fig:qp_energy}}\hspace{1mm}
\subfloat[QP-backstepping voltage input is kept between the lower and upper bounds. ]
{\includegraphics[width=2.1in]{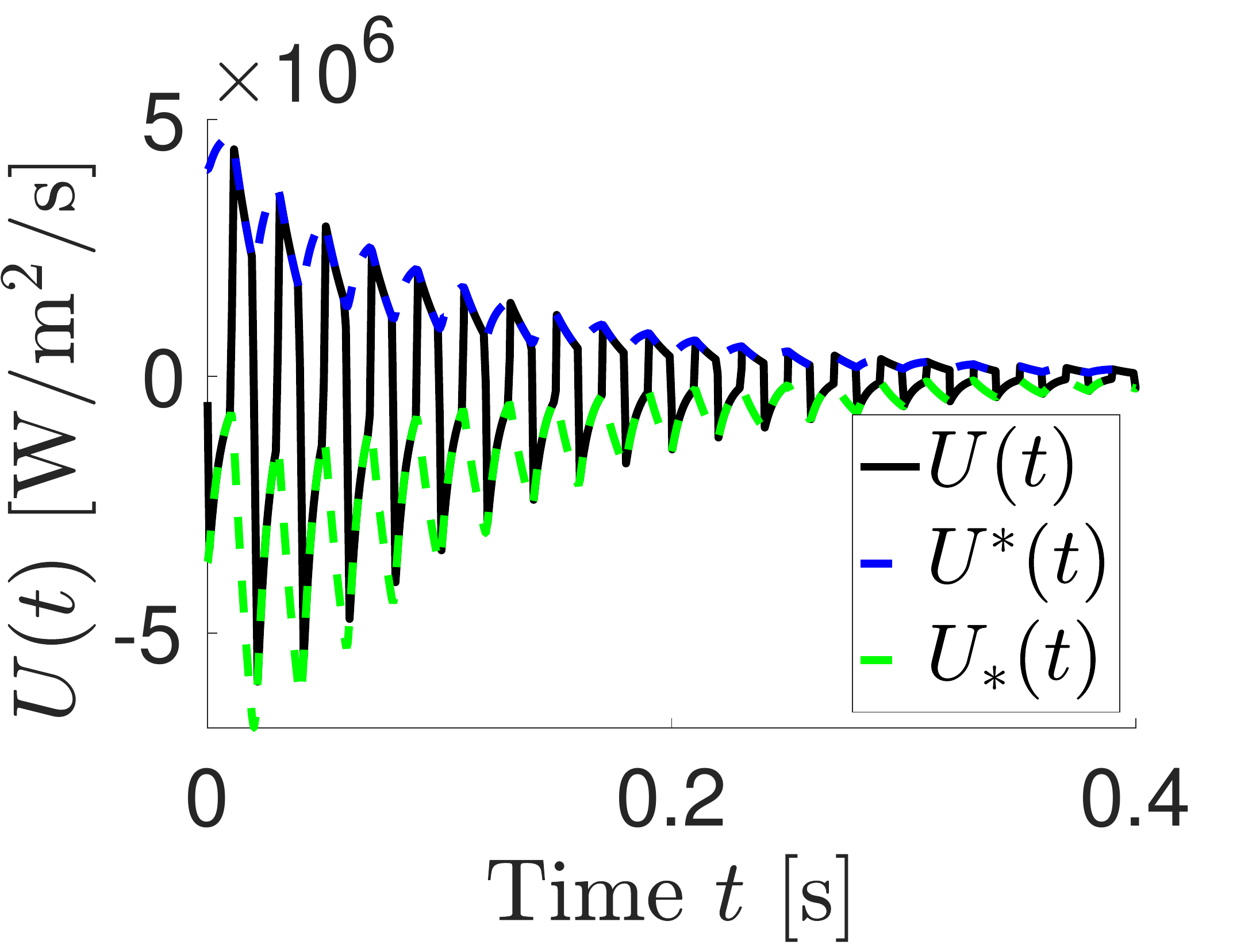}\label{fig:qp_input}}
\caption{Under operator input $U_o(t) = A \sin (\omega t) + B$, which commands both a periodic addition of heat and a net/average removal of heat, and would lead to islands of solid, the QP-backstepping safety input $U$ in plot (e) ensures that the laser $q_{\rm c}$ in plot (c) remains positive, $s$ does not exceed $s_{\rm r}$, and $T$ does not drop below $T_{\rm m}$. The interface $s$ advances because the operator periodically commands addition of heat, in spite of the net command being for a removal of heat ($B<0$).}
\label{fig:qp_result2}
\end{center} 
\end{figure*}

Next, we conduct the simulation of the QP-backstepping safety control \eqref{eq:safety-fin-sat}, with lower bound \eqref{eq:U_star} and upper bound \eqref{eq:U^star}. We set the operator input as $U_o(t) = A \sin (\omega t) + B$, where $A =  1.14 \times 10^7$, $B = -5 \times 10^5$, and $\omega = 2\pi / \tau$ with time period $\tau =  0.02$ [s]. The control gains are set as $k_1 = 64.4$ [/s], $k_2 = 973$ [/s$^2$], $\delta_1 = 129$ [/s], and $\delta_2 = 195 $ [/s$^2$], respectively. Fig. \ref{fig:qp_result2} depicts the result of the closed-loop response. Fig. \ref{fig:qp_result2} (a) illustrates that the interface position monotonically increases with maintaining $s(t)<s_{\rm r}$, and slowly converges to the setpoint position. Fig. \ref{fig:qp_result2} (b) shows QP-backstepping voltage input (black solid), the lower bound (green dash) and the upper bound (blue dash). We observe that the input is affected by both the upper and lower bounds, to maintain the value between them, while it execute the operator input other than that, which is a sinusolidal wave input. We can also see that, as time passes, the lower bound gradually corresponds to the upper bound, as well as the QP input, thereby the regulation of the interface position can be achieved slowly in Fig. \ref{fig:qp_result2} (a). Due to the operator input, Fig. \ref{fig:qp_result2} shows that the boundary temperature is fluctuating through repeating the warming up and cooling down, with maintaining above the melting temperature. A similar behavior can be observed in Fig. \ref{fig:qp_result2} (e) showing the laser power. Also in this setup, as observed in Fig. \ref{fig:result2} (d) and (e), the two CBFs imposed in the problem satisfy the positivity, which ensures the desired performance of the safety filter, being consistent with Theorem \ref{thm:safety}.

\section{Conclusion} \label{sec:conclusion} 

This paper has developed two safe control strategies for the one-phase Stefan PDE-ODE system with actuator dynamics by utilizing CBF. The first one is the non-overshooting control, which is derived by recursive construction of CBFs for ensuring the safety imposed by the physical model, and also achieves the regulation of the moving interface position at a desired setpoint position. The second one is the safety filter design, which employs an operator input under the conditions of maintaining the safety, as a QP-Backstepping-CBF formulation. The stability proof under the non-overshooting control has been achieved by PDE-backstepping method and Lyapunov analysis. The developed methods have been extended to the case of additional constraints from above on the states, the system with higher-order actuator dynamics, and the two-phase Stefan system with an unknown disturbance. The proposed non-overshooting control and the QP safety filter design have been demonstrated in numerical simulation of the process in metal additive manufacturing, which illustrates the desired performance. Namely, both the regulation and safety are simultaneously achieved by the non-overshooting control, while the safety filter affects the system by a chosen operator input with maintaining the safety.



\ifCLASSOPTIONcaptionsoff
  \newpage
\fi



%


\bibliographystyle{plain}
\bibliography{BIB_TAC.bib}

\end{document}